\documentclass[sn-mathphys]{sn-jnl}

\theoremstyle{thmstyleone}%

\usepackage{amsmath}
\usepackage{amssymb}
\usepackage{longtable}
\usepackage{epstopdf}
\usepackage{multirow,makecell}
\usepackage{longtable}
\usepackage{hyperref}
\usepackage{graphicx}
\usepackage{algorithm,algorithmicx}

\usepackage{pdflscape}
\usepackage{rotating}  % µ¼ÑÔÇøÌí¼Ó
\usepackage{graphicx}
\usepackage{subfigure}
\usepackage{enumerate}
\usepackage{xcolor}
\usepackage{color}
\usepackage{amsmath}
\usepackage{amssymb,epsfig,multirow}
\usepackage{float}
\usepackage{color}
\usepackage{booktabs}
\usepackage{enumerate}
\usepackage{algorithm}
\usepackage{algpseudocode}
\usepackage{caption}

\usepackage{epsfig,multirow}
\usepackage{xcolor}
\usepackage{diagbox}

\usepackage[utf8]{inputenc}
\usepackage{epsfig,multirow}
\usepackage{xcolor}
\usepackage{diagbox}
\newtheorem{theorem}{Theorem}%  meant for continuous numbers
\newtheorem{defi}{Definition}%[section]

\theoremstyle{thmstyletwo}%
\newtheorem{lemma}{Lemma}
\theoremstyle{thmstylethree}%
\newtheorem{cor}{Corollary}%[section]
\newtheorem{prop}{Proposition}%[section]
\newtheorem{ass}{Assumption}%[section]
\begin{document}

\title[Global optimization of {\rm (QR)}]{Global optimization of quadratic root-difference minimization under elliptic annulus constraints }

%%=============================================================%%
%% Prefix	-> \pfx{Dr}
%% GivenName	-> \fnm{Joergen W.}
%% Particle	-> \spfx{van der} -> surname prefix
%% FamilyName	-> \sur{Ploeg}
%% Suffix	-> \sfx{IV}
%% NatureName	-> \tanm{Poet Laureate} -> Title after name
%% Degrees	-> \dgr{MSc, PhD}
%% \author*[1,2]{\pfx{Dr} \fnm{Joergen W.} \spfx{van der} \sur{Ploeg} \sfx{IV} \tanm{Poet Laureate}
%%                 \dgr{MSc, PhD}}\email{iauthor@gmail.com}
%%=============================================================%%

\author[1]{\fnm{Meijia} \sur{Yang}}\email{myang@ustb.edu.cn}

\author*[2]{\fnm{Yong} \sur{Xia}}\email{yxia@buaa.edu.cn}
%\equalcont{These authors contributed equally to this work.}

\affil[1]{\orgdiv{School of Mathematics and Physics}, \orgname{University of Science and Technology Beijing}, \orgaddress{ \city{Beijing}, \postcode{100083}, \country{P. R. China}}}

\affil[2]{\orgdiv{LMIB of the Ministry of Education, School of Mathematical Sciences}, \orgname{Beihang University}, \orgaddress{ \city{Beijing}, \postcode{100191}, \country{P. R. China}}}

%%==================================%%
%% sample for unstructured abstract %%
%%==================================%%

\abstract{This paper studies the nonconvex quadratic root-difference minimization under elliptic annulus constraints {\rm (QR)}. We first establish the Annulus Brickman theorem and equivalently reformulate {\rm (QR)} as a 2-dimensional convex problem {\rm (HP)} with hidden variables.
%, which is challenging to solve directly since it is difficult to determine whether a point belongs to its constraints. 
We employ the Frank-Wolfe algorithm to globally solve {\rm (HP)}. A key finding is that the solutions of the Frank-Wolfe subproblems, which are traditionally viewed as mere auxiliary updates, are proven to be $O(1/\sqrt{k})$-approximate solutions of the original problem {\rm (QR)}. This transforms an algorithmic by-product into the primary output and completely bypasses the need to solve the computationally expensive quadratic system required for solution recovery. Leveraging this recovery-free property, we develop the efficient Iterative Minimum Generalized Eigenpair (IMGE) algorithm for globally solving {\rm (QR)}. Numerical experiments confirm that IMGE converges rapidly and significantly outperforms conventional methods, especially for large-scale problems.
%validate the convergence and efficiency of our proposed algorithm.
}

\keywords{Quadratic optimization, Frank-Wolfe algorithm, Generalized eigenvalue decomposition}

%%\pacs[JEL Classification]{D8, H51}

%%\pacs[MSC Classification]{35A01, 65L10, 65L12, 65L20, 65L70}

\maketitle
\jyear{2021}%
\section{Introduction}
Consider the following quadratic root-difference minimization problem under elliptic annulus constraints: 
\begin{eqnarray*}
{\rm(QR)}~ &\min\limits_{x\in\Bbb R^n}& q(x)= x^TAx-\sqrt{x^TBx}\\
~~&{\rm s.t.}&x\in \mathcal{W}:=\left\{x\in\Bbb R^n:~\alpha \leq x^TCx\leq \beta\right\},
\end{eqnarray*}
where $\alpha<\beta<+\infty$, and $A,~B,~C\in\Bbb S^{n\times n}$ are all symmetric matrices. $B\succeq 0$ is assumed to make sure that the problem is well-defined. $C\succ 0$ is assumed so that the constraint is a non-degenerate elliptic annulus. The Slater's condition holds naturally for {\rm (QR)}, i.e., there exists $x\in\Bbb R^n$ such that $\alpha<x^TCx<\beta$. 

Throughout this paper, we work under the following assumptions, which are either mild or can be imposed without loss of generality:
\begin{ass}\label{ass1}
Assume that for any pair $A,~B\in\Bbb S^{n\times n}$ and sequence $\left\{\omega_i\right\}_{i=1}^\infty$, the multiplicity of any eigenvalue of the matrix $A-\omega_i B$ is 1.
\end{ass}
Assumption \ref{ass1} is generic in the sense that it holds under an arbitrarily small perturbation of the matrix $A$.
\begin{ass}\label{ass2}
Assume that $B\succ 0$ in {\rm (QR)}.
\end{ass}
Given Assumption \ref{ass1} and $B\succeq 0$, Assumption \ref{ass2} is mild under an arbitrarily small perturbation and it guarantees that the objective function of {\rm (QR)} is smooth.
\begin{ass}\label{ass3}
Assume that $\alpha>0$ in {\rm (QR)}.
\end{ass}
Since $C\succ 0$ implies $x^TCx\geq0$ holds for all $x\in\Bbb R^n$, the case $\alpha\geq 0$ is natural. The assumption that $\alpha>0$ is without loss of generality and we can refer to Appendix A for details.%Throughout this paper, we assume that the Slater's condition holds for {\rm (QR)}, i.e., there exists $x\in\Bbb R^n$ such that $\alpha<x^TCx<\beta$. 
%\textcolor{red}{so that the problem is bounded? (For more details, we will make an explanation in section 2.)}. 

%{\rm (RQ)} has many applications in optimization. Denote $u=x^TBx\geq 0$, when we take $\Phi(u)= u^2$, {\rm (RQ)} reduces to a quartic-regularized subproblem; When $B=I$ and $\Phi(u)= u^{\frac{3}{2}}$, {\rm (RQ)} reduces to the well-known Nesterov-Polyak cubic-regularized subproblem (\textcolor{red}{references}); When $\Phi(u)=-\sqrt{u}$, {\rm (RQ)} reduces to the homogeneously lifted two-sided extended trust region subproblem \cite{YWX2023}, whose vector case can be used as the relaxation of the Procrustes problem \cite{DSX15}. We name this problem by quadratic root-difference minimization problems under elliptic annulus constraints, and denote it by {\rm (QR)}. 
The motivation of {\rm (QR)} comes from the trust region subproblem:
\begin{eqnarray*}
\min\limits_{x^Tx\leq 1} x^TAx-2b^Tx=\min\limits_{x^Tx\leq 1} x^TAx-\sqrt{x^T(4bb^T)x},
\end{eqnarray*}
where $A\in\Bbb S^{n\times n}$, $b\in\Bbb R^n$, $x\in\Bbb R^n$. {\rm (QR)} was first proposed in \cite{YWX2023}, where the authors named it as the homogeneously lifted two-sided extended trust region subproblem. The authors mentioned only one application about {\rm (QR)} that the vector case of {\rm (QR)} can serve as the relaxation of the Procrustes problem \cite{DSX15}.  

 We will introduce more applications about {\rm (QR)} in optimization. When $rank(B)=1$, {\rm (QR)} reduces to the extended trust region subproblem with interval bounds \cite{PW2014,SW1995,WX2015}, where if $C=I$ and $\alpha=\beta$, it further reduces to the classical trust region subproblem.
%If $A=I$ and the interval bound constraint is removed, the essential meaning of the reduced problem is to compute the dominant eigenvector corresponding to the matrix $B$ (\textcolor{blue}{references LIUXIA...}); 
When $A\succ 0$, $\alpha$ and $\beta$ are appropriately set, {\rm (QR)} is equivalent to solve the eigenvector corresponding to the largest generalized eigenvalue of the matrix pencil $(B,A)$ \cite{LX2025siam}. We can refer to Appendix B for details.
{\rm (QR)} can also be equivalently reformulated as a special case of generalized nonhomogeneous {\rm CDT} (Celis, Dennis, and Tapia) subproblem \cite{CDT1985}. We can refer to Appendix C for details. 

{\rm (QR)} can be used for solving the penalty problem of some optimization problems. When we consider the homogeneous CDT subproblem with an equality constraint and a bilateral constraint:
\begin{eqnarray}
{\rm (HCDT)}&\min\limits_{x\in\Bbb R^n}& x^TAx\nonumber\\
~~~~~~~~~~~&{\rm s.t.}& x^TBx=1, \label{hcdt11}\\
~~~~~~~~~~~&& \alpha\leq x^TCx\leq \beta,\nonumber
\end{eqnarray} 
where $A,~B,~C,~\alpha,~\beta$ have the same definition as in {\rm (QR)}. Let $\rho>0$ be the penalty parameter with respect to the constraint (\ref{hcdt11}). Then the corresponding penalty problem is:
\[
\min\limits_{\alpha\leq x^TCx\leq \beta} x^TAx+\rho(\sqrt{x^TBx}-1)^2=\min\limits_{\alpha\leq x^TCx\leq \beta} x^T(A+\rho B)x-\sqrt{x^T(4\rho^2 B)x}+\rho,
\]
%It can be equivalently rewritten as
%\[
%\min\limits_{\alpha\leq x^TCx\leq \beta} x^T(A+\rho B)x-\sqrt{x^T(4\rho^2 B)x}+\rho,
%\]
which is equivalent to solve the problem {\rm (QR)}.
 
{\rm (QR)} can also be used as the subproblem of some optimization problems. In \cite{WX2019SIAM,Z2013COA,Z2014JCAM}, the authors studied the
problem of maximizing the sum of a Rayleigh quotient and a quadratic form over the unit sphere:
\begin{eqnarray*}
{\rm (SRQ)}~ \max\limits_{x^Tx=1} \frac{x^TD_1x}{x^TD_2x}+x^TD_3x,
\end{eqnarray*}
where $D_i\in\Bbb S^{n\times n},~i=1,2,3$ are symmetric matrices. It is assumed that $D_1\succ 0$ and $D_2\succ 0$. 
We can apply the Quadratic Transform technique, which was proposed in the communication area in \cite{SY2018IEEE} in 2018, on {\rm (SRQ)} and obtain its equivalent reformulation:
\begin{eqnarray*}
{\rm (ESRQ)}~\min_{y\in\Bbb R}\left\{g(y)= \min\limits_{x^Tx=1} y^2x^TD_2x-x^TD_3x-2y\sqrt{x^TD_1x}\right\}
\end{eqnarray*} 
in the sense that $v{\rm (SRQ)}=v{\rm (ESRQ)}$. It is worth mentioning that the Quadratic Transform technique was originally proposed in \cite{KK1990AOR} in 1990 on the sum of linear fractional programming problems. We can incorporate the quadratic fit line search method  \cite{NSX2016} to find the optimal $y$ in {\rm (ESRQ)}. Then evaluating $g(y)$ is equivalent to solve the problem {\rm (QR)}.
%where when $y$ is fixed, it reduces to a special case of {\rm (QR)}. 

{\rm (QR)} is a nonconvex programming problem since both the objective function and the constraints are nonconvex. The hidden convexity property of {\rm (QR)} was first revealed in \cite{YWX2023} with non-quadratic extension of homogeneous S-lemma, where the authors equivalently reformulate {\rm (QR)} as a linear conic programming problem {\rm (LC)}:
%{\rm (QR)} is equivalent to a linear conic programming problem:
\begin{eqnarray*}
{\rm (LC)}&\sup\limits_{\lambda\in \Bbb R^3_+,\mu\geq0}& \lambda_1\alpha-\lambda_2\beta-\mu\\
&{\rm s.t.}& A+(\lambda_2-\lambda_1)C-\lambda_3 B  \succeq 0,\\
&&\left\|\left(\begin{array}{c}1\\ \lambda_3-\mu \end{array}\right)
\right\|\le \lambda_3+\mu
\end{eqnarray*}
%
%\begin{eqnarray*}
%&{\rm (LC)}&\sup_{\lambda\in \Bbb R^3_+,\mu\geq0} \left\{\lambda_1\alpha-\lambda_2\beta-\mu: A+(\lambda_2-\lambda_1)C-\lambda_3 B  \succeq 0,\right.\nonumber\\
%&&~~~~~~~~~~~~~~~~~~~~~~~~~~~~~~~~~~~\left. ~\left\|\left(\begin{array}{c}1\\ \lambda_3-\mu \end{array}\right)
%\right\|\le \lambda_3+\mu\right\}
%\end{eqnarray*}
in the sense that $v({\rm QR})=v({\rm LC})$. Thus we can solve {\rm (QR)} through {\rm (LC)} with CVX and recover its optimal solution in polynomial time \cite{YZ2003}. However, for large-scale implementations of {\rm (LC)}, it may exhibit certain limitations in computational efficiency \cite{BoydBook}. 
%For larger-scale implementations of the (LC) model, the computational efficiency during the solving process tends to degrade.

%\textcolor{red}{need to be updated: emphasis some innovation?}
In this paper, we study the feasibility of developing an efficient algorithm for solving large-scale {\rm (QR)} while maintaining theoretical guarantees in convergence and numerical stability. The contributions are stated below.

1. We develop the Annulus Brickman theorem to equivalently reformulate {\rm (QR)} as a convex programming problem {\rm (HP)} with hidden variables. We observe that there are three quadratic forms in {\rm (QR)}, which lead us to connect it with the well known results proposed by Brickman \cite{B1961} and Calabi-Polyak \cite{C1982,P2007,P1998}. 
%Based on an extension of Hausdorff's result \cite{H1919} regarding the convexity of image of homogeneous quadratic mapping, 
Brickman proved the convexity of the image of two homogeneous quadratic functions over the unit sphere if $n\geq 3$. Calabi and Polyak separately proved that if $n\geq 3$ and there exists a positive definite linear combination of three quadratic forms, then their image set is convex. Following these results, we successfully develop the Annulus Brickman theorem and reformulate {\rm (QR)} as a 2-dimensional convex programming problem {\rm (HP)} with respect to hidden variables. However, since it is not easy to determine whether a point belongs to the constraint of {\rm (HP)}, the ellipsoidal algorithm or the projected gradient method is not applicable \cite{B1999,N2004}. As a result, it is challenging to solve the equivalent convex programming problem {\rm (HP)} directly.

2. We apply the Frank-Wolfe algorithm to solve the 2-dimensional convex programming problem {\rm (HP)} with respect to the hidden variables and prove that the algorithm converges with respect to not only the objective value but also the iterates. The well known Frank-Wolfe algorithm is useful for smooth convex optimization and it has the $O(1/k)$ convergence rate \cite{P2024}. The Frank-Wolfe method was originally introduced in \cite{FW1956} for the minimization of quadratic convex objective function over the polytopes. It was then extended to many different optimization problems and intensively studied \cite{BRZ2024,FG2016}. In this paper, we apply the Frank-Wolfe method on the hidden variables to globally solve {\rm (HP)} and prove the convergence result. The remaining difficulty is to recover the optimal solution of {\rm (QR)} from that of {\rm (HP)} through solving a quadratic system, which is known to be computationally difficult \cite{NLSX2026}. 

3. We develop an iterative minimum generalized eigenpair (IMGE) algorithm to solve {\rm (QR)} and prove that the algorithm converges with respect to the objective value. We prove that the iterates generated by the Frank-Wolfe subproblem are $\delta_k$-approximate solutions of {\rm (QR)} with $\delta_k=O(1/\sqrt{k})$. Thus, it eliminates the need to solve a quadratic system and greatly improves the computational efficiency. Consequently, we develop the IMGE algorithm to solve {\rm (QR)}, where the main computation is to solve the minimum generalized eigenpair of a matrix pencil. We provide some numerical results to demonstrate the convergency and computational efficiency.

Throughout this paper, $\mathcal{S}^n$ denotes the set of symmetric matrices in $\Bbb R^n$. {\rm $v(\cdot)$} denotes the optimal value of {\rm $(\cdot)$}. 
%For a vector $x=(x_1,...,x_n)$, $x\in\Bbb R_+^n$ means that $x_i\geq 0,~i=1,...,n$. 
$I_n$ denotes an $n\times n$ identity matrix. $A\in\Bbb {S}^{n\times n}$ indicates that $A$ is a symmetric matrix. For $A\in\Bbb S^{n\times n}$, $A\succ (\succeq)$ denotes that $A$ is positive (semi-)definite. 
For the matrices $A\in\Bbb S^{n\times n}$ and $B(\in\Bbb S^{n\times n})\succ 0$, $\lambda_{\max}(A,B)$ and $\lambda_{\min}(A,B)$ denote the largest and smallest generalized eigenvalue of the 
matrix pencil $(A,B)$, respectively. 
%$B\in\Bbb R^{n\times n}$, $A\bullet B$ denotes the trace of $A\times B$. 

The paper is organized as follows.  
In section 2, we equivalently reformulate {\rm (QR)} as a 2-dimensional convex programming problem {\rm (HP)} with hidden variables. In section 3, we apply the Frank-Wolfe algorithm to globally solve {\rm (HP)} and prove that the algorithm converges with respcet to the objective value. In section 4, we further prove that the Frank-Wolfe algorithm converges with respect to the iterates. 
In section 5, we develop an iterative minimum generalized eigenpair {\rm (IMGE)} algorithm to globally solve {\rm (QR)} and prove that it converges with respect to the objective value. In section 6, numerical results are carried out and demonstrate the convergency and the computational efficiency of IMGE algorithm compared with the method of solving {\rm (QR)} through {\rm (LC)} with CVX. Conclusions are made in section 7.

\section{Challenging convex reformulation of {\rm (QR)}}
%\begin{eqnarray*}
%{\rm(RQ)}~ &\min\limits_{x\in\Bbb R^n}& q(x)= x^TAx+\Phi(x^TBx)\\
%~~&{\rm s.t.}&x\in W:=\left\{x\in\Bbb R^n:~\alpha \leq x^TCx\leq \beta\right\},
%\end{eqnarray*}
In this section, we equivalently reformulate {\rm (QR)} as a convex programming problem (different from {\rm (LC)}). However, it is still challenging to solve the equivalent convex problem directly.

Letting $s=x^TAx$, $t=x^TBx$, {\rm (QR)} is equivalent to the following 2-dimensional problem with respect to hidden variables $(s,t)$:
\begin{eqnarray}
{\rm (HP)}~&\min\limits_{(s,t)\in\Bbb R^2}& f(s,t)=s-\sqrt{t}\nonumber\\
&{\rm s.t.}& (s,t)\in \Omega=\left\{(x^TAx,x^TBx): \alpha\leq x^TCx\leq\beta,x\in\Bbb R^n\right\}.\nonumber%\label{c1}
%&&\alpha\leq w\leq \beta.\label{c2}
\end{eqnarray} 
Since $C\succ 0$, $\alpha >0$, according to $0<\alpha\leq x^TCx\leq \beta$, we can conclude that $x\neq 0$. Furthermore, since $B\succ 0,~C\succ 0$, we can conclude that $t>0$.

%Denote $S_1=\left\{(s,t,w):(s,t,w)\in S,~\alpha\leq w\leq \beta\right\}$.
We first review the well-known result proposed by Brickman, which reveals the convexity of the image of two homogeneous quadratic functions over the unit sphere:
\begin{theorem}[Brickman, \cite{B1961}]\label{Brickman}
Let $h_1,~h_2:\Bbb R^n\rightarrow \Bbb R$ be homogeneous quadratic functions. If $n\geq 3$, then the set
$
\mathcal{W}_{1}=\left\{(h_1(x),h_2(x)):x\in\Bbb R^n,\|x\|=1\right\}\subset \Bbb R^2
$ is convex.
\end{theorem}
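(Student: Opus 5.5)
We prove Brickman's theorem by reducing convexity of $\mathcal{W}_1$ to a statement about a single line. Write $h_i(x)=x^TH_ix$ with $H_i\in\Bbb S^{n\times n}$. A set in $\Bbb R^2$ is convex if and only if its intersection with every line is convex, i.e.\ connected. So fix a line $\{(u,v):a u+b v=c\}$ with $(a,b)\neq(0,0)$. Set $H=aH_1+bH_2$ and $M=H-cI$. On the sphere $\|x\|=1$ we have $ah_1(x)+bh_2(x)-c=x^TMx$. Hence the intersection of $\mathcal{W}_1$ with the line is the image, under the continuous map $x\mapsto(h_1(x),h_2(x))$, of
\[
K=\{x\in\Bbb R^n:\ \|x\|=1,\ x^TMx=0\}.
\]
It therefore suffices to show that $K$ is either empty or path-connected when $n\geq 3$. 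A continuous image of a path-connected set is path-connected, and a path-connected subset of a line is an interval.

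To prove this, diagonalize $M=Q\,\mathrm{diag}(\mu_1,\dots,\mu_n)Q^T$. The orthogonal change of variables preserves the sphere, so we may assume $K=\{x:\sum x_i^2=1,\ \sum\mu_ix_i^2=0\}$. Let $P=\{i:\mu_i>0\}$, $N=\{i:\mu_i<0\}$ and $Z=\{i:\mu_i=0\}$, and split $x=(x_P,x_N,x_Z)$.

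\emph{Degenerate cases.} If $P$ or $N$ is empty, $K$ is the unit sphere of the coordinate subspace indexed by $Z$. If that subspace has dimension $\ge2$, this sphere is connected. If it has dimension $1$, it is two antipodal points. We then need $|P|+|N|\ge2$, and since one of $P,N$ is empty, all nonzero $\mu_i$ have the same sign. In that case the two points $\pm e_j$ map to the same image point, because $h_i$ is even. So the image is a single point, hence convex.

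\emph{Main case: $P,N\neq\emptyset$.} Take $x,y\in K$. Since $h_1,h_2$ are even, it suffices to connect $x$ to $y$ or to $-y$ inside $K$, or, more loosely, to show that $K/\{\pm1\}$ is connected. Consider the scaling map from $K$ to $S_P\times S_N$, where $S_P$ and $S_N$ are the unit spheres of the weighted norms $\sum_P\mu_ix_i^2$ and $\sum_N|\mu_i|x_i^2$, together with the $Z$-part. Concretely, $K$ is parametrized by $(x_P,x_N,x_Z)$ with $\sum_P\mu_ix_i^2=\sum_N|\mu_i|x_i^2=r^2$ and $\|x\|=1$; this determines $r$ given the directions and $x_Z$. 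One can then deform any point continuously to one with $x_Z=0$, by shrinking $x_Z$ and rescaling $r$. After that, only the directions in $S_P\times S_N$ remain. Since $|P|+|N|+|Z|=n\ge3$, either one of $S_P,S_N$ has dimension $\ge1$ and is connected, or $|Z|\ge1$.

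\textbf{The main obstacle} is the subcase $|P|=|N|=1$. Here $S_P\times S_N$ consists of four points, and we must use the $Z$ direction, which has dimension $\ge1$, to pass between them. The route is to move along $K$ to the point where $x_P=x_N=0$, which lies in $K$ since $x_Z$ is then a unit vector. From there we come back with either sign of $x_P$ and $x_N$. This step needs care, and the case analysis should conclude with $K$ connected in every case except the even-symmetric degenerate one handled above.

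Once connectedness is settled, convexity of $\mathcal{W}_1$ follows because every line section is connected.
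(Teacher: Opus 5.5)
The paper does not prove this statement: it quotes it from Brickman's 1961 paper as background, and its own Annulus Brickman theorem is derived from Calabi--Polyak instead. Your argument therefore has to stand on its own.

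Its architecture is the standard one and is sound. Convexity of $\mathcal{W}_1$ reduces to connectedness of its line sections. Each section is the image of $K=\{x:\|x\|=1,\ x^TMx=0\}$ under $F=(h_1,h_2)$. Because $F$ is even, it is enough that any two points of $K$ be joined inside $K$ up to sign. The following parts are all fine:
\begin{itemize}
\item Your degenerate case is correct.
\item The deformation to $x_Z=0$ works for points with $r>0$.
\item The case $|P|=|N|=1$ is settled by the explicit path $s\mapsto\bigl(r(s)u,\ r(s)w,\ se\bigr)$ with $r(s)^2=(1-s^2)/(\|u\|^2+\|w\|^2)$ and $s\in[0,1]$. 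It stays in $K$, ends at a unit vector $e$ of the $Z$-block, and can be traversed back with any choice of signs for $u,w$. The same path also joins points with $r=0$ to the rest of $K$.
\end{itemize}

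What must be fixed is the claim that $K$ itself is connected in every non-degenerate case. You make it in your first reduction (``it suffices to show that $K$ is either empty or path-connected'') and again in your last sentence. It is false: for $n=3$ and $M=\mathrm{diag}(1,-1,-1)$, $K$ is the pair of disjoint circles $x_1=\pm1/\sqrt2$, $x_2^2+x_3^2=1/2$.

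For the same reason, your sentence ``either one of $S_P,S_N$ has dimension $\ge1$ and is connected, or $|Z|\ge1$'' does not deliver what you need. When $Z=\emptyset$ and, say, $|P|=1<|N|$, the product $S_P\times S_N=\{\pm u_0\}\times S_N$ has two components. There are also no points with $r=0$ to pass through. This is the only non-degenerate subcase in which $K$ is genuinely disconnected, and it has to be closed with the antipodal map. Since $-w\in S_N$ whenever $w\in S_N$, the map $x\mapsto-x$ swaps the two components. So for $x,y\in K$, the point $x$ is joined within $K$ to $y$ or to $-y$, and then $F(x)$ and $F(y)=F(-y)$ are joined in $F(K)$.

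State the target throughout as connectedness of $K$ modulo $\pm1$ and write out this subcase explicitly; with those two changes the proof is complete.
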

Based on Theorem \ref{Brickman}, Calabi and Polyak separately proved the following famous theorem: 
%\begin{theorem}[Calabi-Polyak, \cite{C1982,P1998}]\label{Polyak}
%If $n\geq 3$ and $h_1,h_2,h_3:\Bbb R^n\rightarrow \Bbb R$ are homogeneous quadratic functions such that there exists a positive definite linear combination of them, then the set $W_2=\left\{(h_1(x),h_2(x),h_3(x)):x\in\Bbb R^n\right\}\subset \Bbb R^3
%$
%is convex.
%\end{theorem}
\begin{theorem}[Calabi-Polyak, \cite{C1982,P1998}]\label{Polyak}
Let $n\geq 3$ and $\mathcal{G}=\left\{G_1,G_2,G_3\right\}\subset \mathcal{S}^n$. Suppose that $\mathcal{G}$ has a positive definite linear combination, then the joint numerical range $\left\{(x^TG_1x,x^TG_2x,x^TG_3x):x\in\Bbb R^n\right\}$ is an acute closed convex cone.
\end{theorem}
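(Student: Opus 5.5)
We prove Theorem \ref{Polyak} from Theorem \ref{Brickman} by a change of coordinates followed by a dehomogenization; the cone structure then comes from homogeneity.

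\textbf{Step 1: normalize so that $G_3\succ 0$.} Choose $\mu\in\Bbb R^3$ with $G_0:=\mu_1G_1+\mu_2G_2+\mu_3G_3\succ 0$. Some $\mu_i\neq 0$, say $\mu_3\neq 0$. The map
\[
L:(u_1,u_2,u_3)\mapsto(u_1,u_2,\mu_1u_1+\mu_2u_2+\mu_3u_3)
\]
is an invertible linear map of $\Bbb R^3$. It sends the joint numerical range of $(G_1,G_2,G_3)$ onto that of $(G_1,G_2,G_0)$. Convexity, closedness and acuteness (a pointed cone contained in an open half-space together with the origin) are all preserved by invertible linear maps. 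We may therefore assume $G_3\succ 0$.

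Next substitute $x=G_3^{-1/2}y$. This bijection of $\Bbb R^n$ does not change the range set, and it turns $G_3$ into $I$ and $G_i$ into $H_i:=G_3^{-1/2}G_iG_3^{-1/2}$ for $i=1,2$. The set to study is
\[
K=\{(y^TH_1y,\,y^TH_2y,\,\|y\|^2):y\in\Bbb R^n\}.
\]

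\textbf{Step 2: $K$ is a cone over a convex slice.} For $t\ge 0$ and $\|y\|=1$ we have $(\sqrt t\,y)^TH_i(\sqrt t\,y)=t\,y^TH_iy$. Hence
\[
K=\{t\,(a,b,1):t\ge 0,\ (a,b)\in\mathcal W_1\},
\]
where $\mathcal W_1=\{(y^TH_1y,y^TH_2y):\|y\|=1\}$. Since $n\ge3$, Theorem \ref{Brickman} shows that $\mathcal W_1$ is convex. It is also compact, being the continuous image of the sphere.

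\textbf{Step 3: convexity of $K$.} Take $t_1(p_1,1)$ and $t_2(p_2,1)$ in $K$ with $p_1,p_2\in\mathcal W_1$ and $t_1,t_2\ge0$. If $t_1+t_2=0$, their sum is the origin, which lies in $K$. Otherwise
\[
t_1(p_1,1)+t_2(p_2,1)=(t_1+t_2)\Bigl(\tfrac{t_1p_1+t_2p_2}{t_1+t_2},1\Bigr),
\]
and the point $\frac{t_1p_1+t_2p_2}{t_1+t_2}$ lies in $\mathcal W_1$ by convexity. So $K$ is closed under addition. Since $K$ is also closed under nonnegative scaling, it is a convex cone.

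\textbf{Step 4: closedness and acuteness.} Let $t_k(p_k,1)\to z$ with $p_k\in\mathcal W_1$ and $t_k\ge0$. The third coordinate gives $t_k\to z_3$. By compactness, a subsequence $p_k\to p\in\mathcal W_1$, so $z=z_3(p,1)\in K$; thus $K$ is closed. Every nonzero point of $K$ has third coordinate $t>0$, so $K\setminus\{0\}$ lies in the open half-space $\{u_3>0\}$. Hence $K$ is pointed (acute). Undoing the linear map $L$ from Step 1 finishes the proof.

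\textbf{Main obstacle.} All the genuine difficulty sits in Theorem \ref{Brickman}, which we take as given. Everything else is bookkeeping. The two points that need care are the zero case $t_1=t_2=0$ in Step 3 and the use of compactness of $\mathcal W_1$ in Step 4. Compactness is essential there: without it the cone over the slice could fail to be closed.
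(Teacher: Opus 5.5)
Your proof is correct. The paper gives no proof of this statement; it cites Calabi and Polyak and only remarks that the result rests on Theorem \ref{Brickman}. Your reduction is the natural derivation from Brickman's theorem that this remark points to: map a positive definite combination to the identity, then realize the range as the cone over the compact convex slice $\mathcal W_1\times\{1\}$. Your handling of the origin in Step 3 and your use of the compactness of $\mathcal W_1$ for closedness in Step 4 are what make the argument complete.
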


Now we are ready to propose the following Annulus Brickman theorem, which is essential in the following analysis:
\begin{theorem}[Annulus Brickman Theorem]\label{ab}
Let $A_1\in\Bbb S^{n\times n}$, $A_2\in\Bbb S^{n\times n}$ and $0< \alpha_1<\alpha_2<+\infty$. If $n\geq 3$, then the set 
\[
\mathcal{W}_3=\left\{(x^TA_1x,x^TA_2x):\alpha_1\leq x^Tx\leq\alpha_2,~x\in\Bbb R^n\right\}\subset \Bbb R^2
\]
\end{theorem}
is convex and compact.
\begin{proof}
Consider $A_1,A_2,I_n\in\Bbb S^{n\times n}$, these three matrices admit a positive definite linear combination (e.g., $0A_1+0A_2+I_n\succ 0$). According to Calabi-Polyak theorem (Theorem \ref{Polyak}), the set
\[
\mathcal{T}_1=\left\{(x^TA_1x,x^TA_2x,x^Tx):x\in\Bbb R^n\right\}\subset \Bbb R^3
\]
 is convex. Moreover, the following set
\[
\mathcal{T}_2=\left\{(z_1,z_2,z_3): \alpha_1\leq z_3\leq \alpha_2\right\}\subset\Bbb R^3
\]
is convex as it is an intersection of two closed half-spaces. Consequently, $\mathcal{T}_3\triangleq \mathcal{T}_1\cap \mathcal{T}_2$ is convex. 

Define  
$
\mathcal{T}_4=\left\{x\in\Bbb R^n: \alpha_1\leq x^Tx\leq \alpha_2\right\},
$
which is closed and bounded, hence compact in $\Bbb R^n$. The mapping 
\[
\Psi: \Bbb R^n\rightarrow \Bbb R^3,~\Psi(x)=(x^TA_1x,x^TA_2x,x^Tx)
\]
is continuous, and $\mathcal{T}_3=\Psi(\mathcal{T}_4)$. Therefore, $\mathcal{T}_3$ is compact as the continuous image of a compact set.

Let $\Phi:\Bbb R^3\rightarrow \Bbb R^2$ be the linear projection such that 
\[\Phi(\left\{(c_1,c_2,c_3):c_1,c_2,c_3\in\Bbb R\right\})=\left\{(c_1,c_2):c_1,c_2\in\Bbb R\right\}.\] 
Then $\mathcal{W}_3=\Phi(\mathcal{T}_3)$. Both the convexity and compactness are preserved under continuous maps. Since $\Phi$ is linear and thus continuous, it follows that $\mathcal{W}_3$ is convex and compact. The proof is complete.
\end{proof}

\begin{cor}\label{cor1}
{\rm (HP)} is a convex programming problem.
\end{cor}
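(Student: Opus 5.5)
To show that {\rm (HP)} is a convex programming problem, we check two things: the objective $f(s,t)=s-\sqrt{t}$ is convex on the relevant domain, and the feasible set $\Omega$ is convex (and compact, so the minimum is attained).

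\textbf{The objective.} The function $t\mapsto\sqrt{t}$ is concave on $[0,+\infty)$, so $-\sqrt{t}$ is convex there. Adding the linear term $s$ preserves convexity. Hence $f$ is convex on $\Bbb R\times[0,+\infty)$. As noted before Theorem \ref{Brickman}, every $(s,t)\in\Omega$ has $t>0$, because $x\neq 0$ and $B\succ 0$. So $f$ is convex, and in fact smooth, on a neighborhood of $\Omega$.

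\textbf{The feasible set.} We reduce $\Omega$ to the setting of Theorem \ref{ab} by a change of variables. Since $C\succ 0$, the matrix $C^{1/2}$ exists and is invertible. Set $y=C^{1/2}x$, which is a bijection of $\Bbb R^n$. Then $x^TCx=y^Ty$, $x^TAx=y^TA_1y$ and $x^TBx=y^TA_2y$, where
\[
A_1=C^{-1/2}AC^{-1/2},\qquad A_2=C^{-1/2}BC^{-1/2},
\]
both symmetric. Therefore
\[
\Omega=\left\{(y^TA_1y,\,y^TA_2y):\alpha\leq y^Ty\leq\beta,~y\in\Bbb R^n\right\}.
\]
By Assumption \ref{ass3} and $\alpha<\beta<+\infty$, we have $0<\alpha<\beta<+\infty$. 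Taking $\alpha_1=\alpha$ and $\alpha_2=\beta$, Theorem \ref{ab} gives that $\Omega$ is convex and compact whenever $n\geq 3$.

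\textbf{The main obstacle: small dimension.} The only real issue is that Theorem \ref{ab} requires $n\geq 3$. I would handle this in one of two ways.
\begin{enumerate}[(i)]
\item State the corollary under the standing hypothesis $n\geq 3$. This is the case Brickman's and Polyak's results address, and it is implicitly assumed in this section.
\item For $n\leq 2$, embed the problem in $\Bbb R^3$. Pad $x$ with zeros and extend $A$, $B$, $C$ block-diagonally by a $1\times1$ block.
\begin{itemize}
\item Choose the added entries of $B$ and $C$ positive, so that $B\succ 0$ and $C\succ 0$ still hold.
\item Choose the added entry of $A$ large enough that no new point of $\Omega$ can improve the objective.
\end{itemize}
\end{enumerate}
Route (ii) is awkward: it changes $\Omega$ itself and only preserves the optimal value. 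For this reason I would take route (i) and state $n\geq 3$ explicitly.

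With $n\geq 3$, {\rm (HP)} minimizes a convex function over a convex compact set, so it is a convex programming problem.
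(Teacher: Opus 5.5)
Your proposal is correct and follows the paper's proof. The objective is convex, and a congruence change of variables reduces $\Omega$ to the setting of Theorem \ref{ab}; you use $y=C^{1/2}x$ where the paper uses the Cholesky factor $y=U^Tx$, which is immaterial. Your explicit $n\geq 3$ hypothesis is warranted, because the paper's argument silently needs it and the claim can fail for $n=2$. For example, take $A=\mathrm{diag}(1,-1)$, $B=\left(\begin{smallmatrix}1&1/2\\1/2&1\end{smallmatrix}\right)$, $C=I$, $\alpha=1$, $\beta=1.1$: then $(0,\tfrac12)$ and $(0,\tfrac32)$ lie in $\Omega$, but their midpoint $(0,1)$ does not.
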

\begin{proof}
It is obvious that the objective function $f(s,t)$ defined in {\rm (HP)} is convex. Since $C\succ 0$, we can make a Cholesky decomposition of $C$ that $C=UU^T$, where $U\in\Bbb R^{n\times n}$ is a lower triangular and invertible matrix. Let $y=U^Tx$, then the set $\Omega$ defined in {\rm (HP)} can be equivalently reformulated as:
\[
\Omega=\left\{(y^T\tilde{A}y,y^T\tilde{B}y):\alpha\leq y^Ty\leq \beta, y\in\Bbb R^n\right\},
\]
where $\tilde{A}=U^{-1}AU^{-T}$ and $\tilde{B}=U^{-1}BU^{-T}$ are symmetric matrices. Then by the Annulus Brickman theorem (Theorem \ref{ab}), $\Omega$ is a convex set. Consequently, {\rm (HP)} is a convex programming problem. The proof is complete.
%
%
%The constraint can be equivalently rewritten as 
%\[
%(s,t,w)\in S=\left\{(s,t,w)\in S_1,~\alpha\leq w\leq \beta\right\},\]
%where $S_1=\left\{(x^TAx,x^TBx,x^TCx): x\in\Bbb R^n\right\}$. According to Theorem \ref{Polyak}, $S_1$ is a convex set. Besides, $\alpha\leq w \leq \beta$ is a linear constraint. Hence the constraint of {\rm (P)} is convex. 
%Thus{\rm (P)} is a convex programming problem.
\end{proof}

% There are many well-known algorithms for 

Though {\rm (HP)} is a 2-dimensional convex programming problem, the ellipsoidal algorithm or the projected gradient method is not applicable. Since the feasible set $\Omega$ lacks an explicit analytical description, it is even not easy to determine whether a point belongs to $\Omega$. We can solve {\rm (QR)} in two stages. First, we obtain the optimal solution $(s^*,t^*)$ of the 2-dimensional convex programming problem {\rm (HP)}. Then, we recover the optimal solution $x^*$ of {\rm (QR)} by solving the quadratic system 
\begin{eqnarray}\label{qua}
x^TAx=s^*,~x^TBx=t^*
\end{eqnarray}
subject to $\alpha\leq x^TCx\leq \beta$, which is computationally difficult \cite{NLSX2026}.

%We can equivalently view {\rm (P)} as a bilevel programming problem:
%\textcolor{red}{
%\begin{eqnarray*}
%&\min\limits_{(s,t,x)\in\Bbb R^{n+2}}&~s-\sqrt{t}\\
%&{\rm s.t.}&~x=\arg\min\limits_{x\in \Omega_1} 0,
%\end{eqnarray*}
%}
%where $\Omega_1=\left\{x^TAx=s,x^TBx=t,\alpha\leq x^TCx\leq \beta\right\}$. In the upper level, we obtain the optimal $(s^*,t^*)$ through solving a 2-dimensional convex programming problem. However, the constraint is difficult to characterize. In the lower level, we need to resolve $x^*$ with obtained $(s^*,t^*)$, where we need to \textcolor{red}{solve the quadratic system $\Omega_1$.}

%
%\textcolor{red}{When solving {\rm (P)} at the $k$-th iteration and obtain its optimal solution, denoted by $(s_k,t_k)$, we need to solve the following quadratic constrained quadratic programming problem to obtain the optimal $x_k$:
%\[\min_{x\in\Bbb R^n}\left\{ s_k-\sqrt{t_k}:~x^TCx\geq \alpha,x^TCx\leq \beta\right\},\]
%which can be solved by the method proposed in \cite{YZ2003}.
%} 
%  
\section{Frank-Wolfe solvability of {\rm (HP)}}
%\begin{eqnarray}
%{\rm (P)}~&\min\limits_{s,t\in\Bbb R}& f(s,t)=s+\Phi(t)\nonumber\\
%&{\rm s.t.}& (s,t)\in \Omega=\left\{(x^TAx,x^TBx): \alpha\leq x^TCx\leq\beta,x\in\Bbb R^n\right\}.\nonumber%\label{c1}
%%&&\alpha\leq w\leq \beta.\label{c2}
%\end{eqnarray} 
In this section, we will apply the Frank-Wolfe algorithm on the hidden variables $(s,t)$ to solve {\rm (HP)} and prove that the algorithm converges with respect to the objective value.

Recall that $t> 0$, the gradient of $f(s,t)$ is
\[
\nabla f(s,t)=\left(1,~-\frac{1}{2\sqrt{t}}\right)^T.
\]
%where $\epsilon>0$ is a sufficiently small constant, which is added in case that $t=0$ and $\nabla f(s,t)$ is meaningless. 
Then we can apply the Frank-Wolfe framework on $(s,t)$ to solve {\rm (HP)}. At the $k$-th iteration, we need to solve the following linearized subproblem: 
\begin{equation}
\min\limits_{(s,t)\in \Omega}\nabla f(s_k,t_k)^T\left(\begin{array}{c}
s\\
t
\end{array}\right)=s-\frac{1}{2\sqrt{t_k}}t,\label{p4}%-s_k+\frac{\sqrt{t_k}}{2}
\end{equation}
whose optimal solution is denoted by $(\hat{s}_k,\hat{t}_k)$ throughout this paper. Then the search direction at the $k$-th iteration is 
\[
d_k=\left(\begin{array}{c}
\hat{s}_k-s_k\\
\hat{t}_k-t_k
\end{array}\right).
\]
Now we are facing the problem about solving (\ref{p4}). We will show in the following proposition that (\ref{p4}) becomes tractable when it is equivalently reformulated as a new problem with respect to the variable $x$ of the original problem {\rm (QR)}.
\begin{prop}\label{prop1}
 Define $M_k(\in\Bbb S^{n\times n})=A-\frac{1}{2\sqrt{t_k}}B$. Let 
 $(\lambda_g,v_g)\in(\Bbb R\times \Bbb R^n)$ be the minimum normalized generalized eigenpair of $(M_k,C)$ with $v_g^TCv_g=1$. Then the optimal solution of problem (\ref{p4}) is $(\hat{s}_k, \hat{t}_k)=(\hat{x}_k^TA\hat{x}_k,\hat{x}_k^TB\hat{x}_k)$, where $\hat{x}_k$ is given by:
 
%  % \begin{equation}\label{gcdt3}
%x_k=\left\{
%\begin{aligned}
%(\alpha v_g^TAv_g,\alpha v_g^TBv_g),&~~~if \lambda_g= 0,\\
%(\beta v_g^TAv_g,\beta v_g^TBv_g),&~~~if \lambda_g> 0,\\
%(\beta v_g^TAv_g,\beta v_g^TBv_g),&~~~if \lambda_g< 0,
%\end{aligned}
%\right.
% \end{equation}  
%  $x_k$ is given by:
% 
Case 1. If $\lambda_g=0$,
 %(equivalently, $\lambda_{min}(A,B)=\frac{1}{2\sqrt{t_k}}$; or equivalently, $M_k\succeq 0$ and $dim(ker(M_k))\geq 1$)
 then $\hat{x}_k=\sum_{i=1}^mc_iu_i$ with $m=\dim(\ker(M_k))$, $\left\{u_1,u_2,...,u_m\right\}$ being an orthonormal basis of $\ker(M_k)$, and $c_i\in\Bbb R,~i=1,...,m$ chosen such that $\alpha<\hat{x}_k^TC\hat{x}_k<\beta$. In this case, $v{\rm (GP_k)}=0$.

Case 2. If $\lambda_g>0$, then $\hat{x}_k=\sqrt{\alpha}v_g$, $v{\rm (GP_k)}=\lambda_g\alpha$.

Case 3. If $\lambda_g<0$, then $\hat{x}_k=\sqrt{\beta}v_g$, $v{\rm (GP_k)}=\lambda_g\beta$.

\end{prop}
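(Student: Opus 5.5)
We transfer problem (\ref{p4}) back to the original variable $x$. Since every $(s,t)\in\Omega$ has the form $(x^TAx,x^TBx)$ with $\alpha\le x^TCx\le\beta$, problem (\ref{p4}) has the same optimal value as
\[
{\rm (GP_k)}\quad \min_{x\in\Bbb R^n}\ x^TM_kx\quad {\rm s.t.}\quad \alpha\le x^TCx\le \beta ,
\]
with $M_k=A-\frac{1}{2\sqrt{t_k}}B$. Moreover, any optimal $\hat x_k$ of ${\rm (GP_k)}$ yields the optimal pair $(\hat x_k^TA\hat x_k,\hat x_k^TB\hat x_k)$ of (\ref{p4}). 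The proof therefore reduces to solving the homogeneous quadratic problem ${\rm (GP_k)}$ over an elliptic annulus.

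\emph{Radial decomposition.} Write $x=\sqrt{r}\,v$ with $v^TCv=1$ and $r=x^TCx\in[\alpha,\beta]$; this is valid because $x\neq0$ on the feasible set. Then $x^TM_kx=r\,v^TM_kv$, so
\[
v({\rm GP_k})=\min_{r\in[\alpha,\beta]}\ r\,\min_{v^TCv=1}v^TM_kv .
\]
By the generalized Rayleigh quotient (use the Cholesky factor $C=UU^T$ as in Corollary \ref{cor1} to reduce to an ordinary symmetric eigenproblem), the inner minimum equals $\lambda_g$ and is attained at $v_g$. The outer problem is to minimize the linear function $r\lambda_g$ on $[\alpha,\beta]$. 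This splits into three cases:
\begin{itemize}
\item if $\lambda_g>0$, the minimum is at $r=\alpha$, giving $\hat x_k=\sqrt{\alpha}v_g$ and value $\lambda_g\alpha$;
\item if $\lambda_g<0$, the minimum is at $r=\beta$, giving $\hat x_k=\sqrt{\beta}v_g$ and value $\lambda_g\beta$;
\item if $\lambda_g=0$, then $x^TM_kx\ge 0$ for all $x$, so $M_k\succeq 0$ and the optimal value is $0$.
\end{itemize}

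\emph{The case $\lambda_g=0$.} Since $M_k\succeq0$, the minimizers of ${\rm (GP_k)}$ are exactly the feasible $x$ with $x^TM_kx=0$, that is, $x\in\ker(M_k)$. The kernel is nontrivial because $v_g\in\ker(M_k)$. Hence any $\hat x_k=\sum_i c_iu_i$ in $\ker(M_k)$ that is scaled so that $\alpha<\hat x_k^TC\hat x_k<\beta$ is optimal. Such a scaling exists because $C\succ0$ and $\alpha<\beta$.

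\emph{Main obstacle.} The delicate step is the exact equivalence between (\ref{p4}) and ${\rm (GP_k)}$, including attainment. Attainment follows because $\Omega$ is compact by Theorem \ref{ab} after the change of variables in Corollary \ref{cor1}. The value identity holds because the objective of (\ref{p4}) is precisely $x^TM_kx$ evaluated on the image. The remaining steps are the routine Rayleigh-quotient argument above.
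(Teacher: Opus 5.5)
Your proof is correct, but it takes a different route from the paper. The paper writes the Lagrangian of ${\rm (GP_k)}$ with multipliers $\mu_1,\mu_2\ge 0$ and takes the KKT system at an optimal $\hat x_k$. It then splits by complementary slackness into the interior, the inner boundary $x^TCx=\alpha$, and the outer boundary $x^TCx=\beta$. In the interior case it uses global minimality to force $M_k\succeq 0$, hence $\lambda_g=0$. In the boundary cases it matches the multiplier to $\lambda_g$ by setting $\mu_1=\lambda_g$ or $\mu_2=-\lambda_g$.

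Your polar decomposition $x=\sqrt{r}\,v$ with $v^TCv=1$ instead separates ${\rm (GP_k)}$ into a generalized Rayleigh quotient multiplied by the linear function $r\lambda_g$ on $[\alpha,\beta]$. This gives global optimality, the optimal value, and the minimizer in one step.

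Your route buys rigor and economy. KKT conditions are only necessary for this nonconvex problem. The paper's step ``we can set $\mu_1=\lambda_g$, then the optimal solution is $\hat x_k=\sqrt{\alpha}v_g$'' really only exhibits a KKT point. To conclude global optimality one must still compare the candidate values $\mu_1\alpha$, $-\mu_2\beta$ and $0$ over all KKT points, and check that the sign of $\lambda_g$ rules out the other cases. Your argument makes all of that unnecessary. It also makes the paper's remark that Cases 2 and 3 remain valid when $\lambda_g=0$ immediate.

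The KKT route, in exchange, records which constraint is active and the associated multipliers. That connects naturally with the Lagrangian-duality viewpoint behind the (LC) reformulation.

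One minor point: appealing to Theorem~\ref{ab} for attainment is unnecessary, and that theorem carries the irrelevant hypothesis $n\ge 3$. Attainment is already explicit in your construction, via $v_g$ and the compact interval $[\alpha,\beta]$, or simply from compactness of $\mathcal{W}$ when $C\succ 0$.
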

\begin{proof}
The problem (\ref{p4}) is equivalent to the following minimization problem:
%\begin{eqnarray*}\label{eigg}
%(\text{GP}_k)&\min\limits_{x\in\Bbb R^n}&~x^TAx-\frac{1}{2\sqrt{t_k}}x^TBx,\\
%&s.t.&~\alpha\leq x^TCx\leq \beta.
%\end{eqnarray*}
\begin{eqnarray*}\label{eigg}
(\text{GP}_k)&\min\limits_{x\in\Bbb R^n}&~x^TM_kx\\
&s.t.&~\alpha\leq x^TCx\leq \beta,
\end{eqnarray*}
where $M_k=A-\frac{1}{2\sqrt{t_k}}B$. Denote the normalized minimum generalized eigenpair of the pencil $(M_k,C)$ by $(\lambda_g,v_g)\in(\Bbb R\times \Bbb R^n)$ with $v_g^TCv_g=1$. Let $\mu_1\geq 0$ and $\mu_2\geq 0$ be the Largangian multipliers corresponding to the constraints $\alpha\leq x^TCx$ and $x^TCx\leq \beta$, respectively. The Largangian function of $(\text{GP}_k)$ is
\[
L(x,\mu_1,\mu_2)=x^TM_kx+\mu_1(\alpha-x^TCx)+\mu_2(x^TCx-\beta).
\]
Let $\hat{x}_k$ be an optimal solution of $(\text{GP}_k)$. The KKT system is 
\begin{eqnarray}
&&M_k\hat{x}_k=(\mu_1-\mu_2)C\hat{x}_k,\label{kkt0}\\
&&\alpha\leq \hat{x}_k^TC\hat{x}_k\leq \beta,\nonumber\\
&&\mu_1(\alpha-\hat{x}_k^TC\hat{x}_k)=0,\label{kkt1}\\
&&\mu_2(\hat{x}_k^TC\hat{x}_k-\beta)=0,\label{kkt2}\\
&&\mu_1\geq 0,~\mu_2\geq 0.\nonumber
\end{eqnarray}
%\begin{equation*}
%\left\{
%\begin{aligned}
%M_kx=(\mu_1-\mu_2)Cx,\\
%\alpha\leq x^TCx\leq \beta,\\
%\mu_1(\alpha-x^TCx)=0,\\
%\mu_2(x^TCx-\beta)=0,\\
%\mu_1\geq 0,~\mu_2\geq 0.
%\end{aligned}
%\right.
%\end{equation*} 

 Let's consider the following three cases based on the complementary slackness conditions: 

Case (i): $\alpha<\hat{x}_k^TC\hat{x}_k<\beta$. According to (\ref{kkt1}) and (\ref{kkt2}), it holds that $\mu_1=\mu_2=0$. The equality (\ref{kkt0}) yields $M_k\hat{x}_k=0$. Thus, $\hat{x}_k\in \ker(M_k)~\symbol{92}\{0\}$ and the optimal value is $\hat{x}_k^TM_k\hat{x}_k=0$. 
This case occurs only if there exists a nonzero $x\in \ker(M_k)$ satisfying $\alpha<x^TCx<\beta$. 

Since $\hat{x}_k^TM_k\hat{x}_k=0$ is the global minimum objective value of $(\text{GP}_k)$, it holds that $M_k\succeq 0$. Otherwise a feasible point with negative objective value would exist. Because $M_k\succeq 0$ and $C\succ 0$, all the generalized eigenvalues of the pencil $(M_k,C)$ are nonnegative. Thus, $\lambda_g\geq 0$. Moreover, $M_k\hat{x}_k=0$ implies that 0 is indeed a generalized eigenvalue of $(M_k,C)$. Consequently, $\lambda_g=0$ and $\dim(\ker(M_k))\geq 1$. 

Denote $m=\dim(\ker(M_k))$, it must hold that $m\geq 1$. Let ${u_1,...,u_m}$ be an orthonormal basis of $\ker(M_k)$. Then $\hat{x}_k=\sum_{i=1}^mc_iu_i$, where $c_i,~i=1,...,m$ is chosen to satisfy $\alpha<\hat{x}_k^TC\hat{x}_k<\beta$. 
%
%\textcolor{blue}{We will further derive another equivalent description of $\lambda_g=0$.} The equality $M_k\hat{x}_k=0$ is equivalent to $A\hat{x}_k=\frac{1}{2\sqrt{t_k}}B\hat{x}_k$, which shows that $\frac{1}{2\sqrt{t_k}}$ is a generalized eigenvalue of the pencil $(A,B)$ with eigenvector $\hat{x}_k$. Furthermore, $\frac{1}{2\sqrt{t_k}}$ is indeed the minimum generalized eigenvalue of $(A,B)$. 
%Assume that there exist $\tilde{\lambda}<\frac{1}{2\sqrt{t_k}}$ and $y\neq 0$ such that $Ay=\tilde{\lambda} By$. Consider 
%\[
%M_ky=Ay-\frac{1}{2\sqrt{t_k}}By=(\tilde{\lambda}-\frac{1}{2\sqrt{t_k}})By,
%\] 
%which indicates that $\tilde{\lambda}-\frac{1}{2\sqrt{t_k}}$ is a generalized eigenvalue of the pencil $(M_k,B)$ with eigenvector $y$. Since $M_k\succeq 0$ and $B\succ 0$, it holds that $\tilde{\lambda}-\frac{1}{2\sqrt{t_k}}\geq 0$. Therefore, there does not exist a generalized eigenvalue of the pencil $(A,B)$ that is smaller that 
%$\frac{1}{2\sqrt{t_k}}$. Thus, $\frac{1}{2\sqrt{t_k}}=\lambda_{min}(A,B)$.

Case (ii): $\hat{x}_k^TC\hat{x}_k=\alpha$. According to (\ref{kkt1}), it holds that $\mu_2=0$. Then (\ref{kkt0}) turns to be $M_k\hat{x}_k=\mu_1C\hat{x}_k$. 
%Let $(\lambda_\alpha(\in\Bbb R),v_\alpha(\in\Bbb R^n))$ be the normalized minimum generalized eigenpair of $(M_k,C)$ with $v_\alpha^TCv_\alpha=1$. 
If $\lambda_g> 0$, we can set $\mu_1=\lambda_g\geq 0$ which satisfies the nonnegativity requirement. Then the optimal solution is $\hat{x}_k=\sqrt{\alpha}v_g$, with an optimal value $\hat{x}_k^TM_k\hat{x}_k=\mu_1\hat{x}_k^TC\hat{x}_k=\lambda_g\alpha$. 

Case (iii): $\hat{x}_k^TC\hat{x}_k=\beta$. According to (\ref{kkt2}), it holds that $\mu_1=0$. Then (\ref{kkt0}) turns to be $M_k\hat{x}_k=-\mu_2C\hat{x}_k$. 
%Let $(\lambda_\beta(\in\Bbb R),v_\beta(\in\Bbb R^n))$ be the normalized minimum generalized eigenpair of $(M_k,C)$ with $v_\beta^TCv_\beta=1$. 
If $\lambda_g< 0$, we can set $\mu_2=-\lambda_g$ which satisfies the nonnegativity requirement. Then the optimal solution is $\hat{x}_k=\sqrt{\beta}v_g$, with an optimal value $\hat{x}_k^TM_k\hat{x}_k=\mu_2\hat{x}_k^TC\hat{x}_k=\lambda_g\beta$. 

It is worth noting that Cases (ii) and (iii) remain valid when $\lambda_g=0$. We treat $\lambda_g=0$ as a distinct case to provide a more comprehensive theoretical framework. Besides, Cases (i)-(iii) correspond to Cases 1-3 in the proposition.
%
%In summary, we can solve $(\text{GP}_k)$ in the following way. First of all, we can solve the normalized minimum generalized eigenpair $(\lambda_g(\in\Bbb R),v_g(\in\Bbb R^b))$ of the pencil $(M_k,C)$. Then, we obtain the following three cases based on $\lambda_g$:
%
%(i) If $\lambda_g=0$, then $x_k=\bar{x}$, $v{\rm (GP_k)}=0$;
%
%(ii) If $\lambda_g\geq 0$, then $x_k=\sqrt{\alpha}v_g$, $v{\rm (GP_k)}=x_k^TM_kx_k=\mu_1x_k^TCx_k=\lambda_g\alpha$;
%
%(iii) If $\lambda_g<0$, then $x_k=\sqrt{\beta}v_g$, $v{\rm (GP_k)}=x_k^TM_kx_k=\mu_2x_k^TCx_k=\lambda_g\beta$.\\  

Finally, according to the constraint in the problem (\ref{p4}) that $(s,t)\in \Omega$, the optimal solution is $(\hat{s}_k,\hat{t}_k)=(\hat{x}_k^TA\hat{x}_k,\hat{x}_k^TB\hat{x}_k)$. 
The proof is complete. 
\end{proof}

Let $(s^*, t^*)$ be the optimal solution of {\rm (HP)}, $(s_{k},t_{k})$ be the iterates of the Frank-Wolfe algorithm, $(\hat{s}_{k},\hat{t}_{k})$ be the optimization of (\ref{p4}), and $\hat{x}_k$ be the optimal solution of $(\text{GP}_k)$ throughout this paper. We summarize the framework of the Frank-Wolfe algorithm for solving {\rm (HP)} as bellow:
\begin{algorithm}[H]\caption{Frank-Wolfe Algorithm for Globally Solving {\rm (HP)}.}\label{algfw}
\textbf{Input:} Initial point $(s_1,t_1)\in\Omega$, $k=1$.\\%, terminate tolerance $\Delta$
\textbf{Output:} An approximate optimal solution of {\rm (HP)}.
\begin{algorithmic}[1]
        \While{termination criterion not satisfied}
        \State Solve the optimal solution $(\hat{s}_k,\hat{t}_k)$ of problem (\ref{p4}) via Proposition \ref{prop1}.
        \State Update stepsize $\gamma_k$.    
        \State 
        Update
        $(s_{k+1},t_{k+1})=(1-\gamma_k)(s_k,t_k)+\gamma_k(\hat{s}_k,\hat{t}_k)$. 
        \State 
        Update $k:=k+1$.
        \EndWhile
    \end{algorithmic}  
    \end{algorithm}
\begin{defi}{\rm(L-smooth. \cite{P2024})}\label{def1}
Let $\mathcal{P}$ be a convex set and $g:\mathcal{P}\rightarrow \Bbb R$ be a differentiable function. Then $g$ is L-smooth if it holds for all $x,y\in \mathcal{P}$ that
\[g(y)-g(x)\leq\left <\nabla g(x),y-x\right >+\frac{L}{2}\|y-x\|^2.\]
\end{defi}
\begin{defi}{\rm($\epsilon$-approximate solution)}
For $\epsilon>0$, $x_k$ is an $\epsilon$-approximate solution of problem {\rm (P)} if 
\[
v{\rm (P)}\leq g(x_k)\leq v{\rm (P)}+\epsilon
\]
with $g(x)$ being the objective function of {\rm (P)}.
\end{defi}
\begin{theorem}{\rm (Primal convergence of the Frank-Wolfe algorithm. \cite{J2013,P2024})}\label{fw1}
Let $h$ be an L-smooth convex function and let $\mathcal{C}$ be a compact convex set of diameter $D$. Consider the iterates of the Frank-Wolfe algorithm for solving $\min\limits_{x\in\mathcal{C}}h(x)$ with stepsize determined by either $\gamma=\frac{2}{k+2}$ or the exact line search. Then the following holds
\[
h(x_k)-h(x^*)\leq \frac{2LD^2}{k+2},
\]
and hence for any accuracy $\epsilon>0$, we have $h(x_k)-h(x^*)\leq \epsilon$ for all $k\geq \frac{2LD^2}{\epsilon}$.
\end{theorem}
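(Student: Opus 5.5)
The statement is the standard $O(1/k)$ primal rate of Frank--Wolfe. I will prove it by induction on the primal gap $h_k:=h(x_k)-h(x^*)$, using only $L$-smoothness (Definition \ref{def1}), convexity of $h$, and the diameter bound $\|\hat{x}_k-x_k\|\le D$. Here $\hat{x}_k\in\arg\min_{x\in\mathcal{C}}\langle\nabla h(x_k),x\rangle$ is the Frank--Wolfe vertex; it exists because $\mathcal{C}$ is compact. The update is $x_{k+1}=x_k+\gamma_k(\hat{x}_k-x_k)$.

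\textbf{Step 1: one-step descent inequality.} Apply Definition \ref{def1} with $y=x_{k+1}$, $x=x_k$:
\[
h(x_{k+1})\le h(x_k)+\gamma_k\langle\nabla h(x_k),\hat{x}_k-x_k\rangle+\frac{L\gamma_k^2}{2}\|\hat{x}_k-x_k\|^2 .
\]
By optimality of $\hat{x}_k$ in the linear subproblem, $\langle\nabla h(x_k),\hat{x}_k-x_k\rangle\le\langle\nabla h(x_k),x^*-x_k\rangle$. By convexity the right-hand side is at most $h(x^*)-h(x_k)=-h_k$. Combined with $\|\hat{x}_k-x_k\|\le D$, this gives
\[
h_{k+1}\le(1-\gamma_k)h_k+\frac{L D^2}{2}\gamma_k^2 .
\]
This holds for every $\gamma_k\in[0,1]$. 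Under exact line search, $h(x_{k+1})$ is at most the value obtained with any fixed $\gamma\in[0,1]$, in particular with $\gamma=\frac{2}{k+2}$. So it suffices to treat the open-loop rule $\gamma_k=\frac{2}{k+2}$.

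\textbf{Step 2: induction.} I will show $h_k\le\frac{2LD^2}{k+2}$; the indexing convention has to be checked against the paper's start at $k=1$.

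For the base case, use the step from index $0$ with $\gamma_0=1$: $h_1\le\frac{LD^2}{2}\le\frac{2LD^2}{3}$. If the iteration starts at $k=1$ with $\gamma_1=2/3$, I would instead start from the first computed iterate. Alternatively, one step gives $h_2\le\frac13 h_1+\frac{2LD^2}{9}$, which needs a bound on $h_1$; this is where I would prepend a virtual step $\gamma_0=1$, the standard trick in \cite{J2013}.

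For the inductive step, assume $h_k\le\frac{2LD^2}{k+2}$ and substitute $\gamma_k=\frac{2}{k+2}$:
\[
h_{k+1}\le\frac{k}{k+2}\cdot\frac{2LD^2}{k+2}+\frac{2LD^2}{(k+2)^2}=2LD^2\frac{k+1}{(k+2)^2}\le\frac{2LD^2}{k+3},
\]
where the last step uses $(k+1)(k+3)\le(k+2)^2$.

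\textbf{Step 3: the iteration count.} The claim "$h_k\le\epsilon$ for all $k\ge 2LD^2/\epsilon$" follows at once, since $k\ge 2LD^2/\epsilon$ gives $\frac{2LD^2}{k+2}<\epsilon$.

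The only delicate point is the base case and its indexing: the rate has to be anchored with a first step of size $1$, or checked directly at the first index. I expect no other obstacle.
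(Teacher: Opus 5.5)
Your proposal is correct and is essentially the standard proof from the references the paper cites for this result (the paper itself states it without proof). That proof consists of the recursion $h_{k+1}\le(1-\gamma_k)h_k+\frac{LD^2}{2}\gamma_k^2$, obtained from $L$-smoothness, optimality of the linear subproblem, convexity and the diameter bound, followed by induction with $\gamma_k=\frac{2}{k+2}$ anchored by a first step of size $1$, with exact line search covered by comparison with that step.

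On the indexing issue you raise, the bound must be read with exactly that convention ($x_1=\hat{x}_0$, produced with $\gamma_0=1$). Your fallback fixes for the paper's Algorithm~\ref{algfw} indexing (arbitrary feasible start at $k=1$, $\gamma_1=2/3$) cannot work. A linear $h$ is $L$-smooth for every $L>0$, and under that indexing the open-loop iterates give $h_k=\frac{2h_1}{k(k+1)}>0$, so $\frac{2LD^2}{k+2}$ fails at every $k$ once $L$ is small. The line-search variant does recover the bound for $k\ge 2$, because its first step may take $\gamma=1$.
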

The convergency of Algorithm \ref{algfw} is ensured by the following theorem:
\begin{theorem}{\rm(Primal convergence of Algorithm \ref{algfw})}\label{thm3}
If we apply Algorithm \ref{algfw} to solve {\rm (HP)} with stepsize $\gamma_k$ determined by either $\gamma_k=\frac{2}{k+2}$ or the exact line search, then it holds for every $k\geq 1$ that:
\begin{eqnarray}
f(s_k,t_k)-f(s^*,t^*)\leq \frac{2LD^2}{k+2},\label{converge}
%f(x_k)-f(x^*)\leq \frac{2C_f}{k+2}(1+\delta),
\end{eqnarray}
%for the iterates $x_k$, 
where $(s_k,t_k)\in\Omega$ is the $k$-th iterate, $(s^*,t^*)\in \Omega$ is an optimal solution of {\rm (HP)}. Here
\begin{eqnarray}
L&=&\frac{1}{4(\lambda_{\min}(B,C)\alpha)^{3/2}},\label{eql}\\
D&=&\sqrt{(s_{\max}-s_{\min})^2+(\lambda_{\max}(B,C)\beta-\lambda_{\min}(B,C)\alpha)^2}\label{eqd}
\end{eqnarray}
with 
\begin{eqnarray}
s_{\max}&=&\max\{\lambda_{\max}(A,C)\beta,~\lambda_{\max}(A,C)\alpha\},\label{eq5}\\
s_{\min}&=&\min\{\lambda_{\min}(A,C)\alpha,~\lambda_{\min}(A,C)\beta\}.\label{eq6}
%&&t_{\max}=\max\{\lambda_{\max}(B,C)\beta,\lambda_{\max}(B,C)\alpha\}, \label{eq7}\\
%&&t_{\min}=\min\{\lambda_{\min}(B,C)\alpha,\lambda_{\min}(B,C)\beta\}.\label{eq8}
\end{eqnarray}
%Consequently, for any accuracy $\epsilon>0$, $q(x_k)-q(x^*)\leq\epsilon$ holds for all $k\geq\frac{2LD^2}{\epsilon}$.
\end{theorem}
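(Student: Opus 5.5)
The plan is to apply Theorem \ref{fw1} with $h=f$ and $\mathcal{C}=\Omega$. This requires three facts: $\Omega$ is compact and convex; $f$ is convex and $L$-smooth on $\Omega$ with $L$ as in (\ref{eql}); and $\Omega$ has diameter at most $D$ as in (\ref{eqd}). Convexity of $\Omega$ and of $f$ is Corollary \ref{cor1}. Compactness follows from Theorem \ref{ab} after the Cholesky change of variables $y=U^Tx$ used in the proof of Corollary \ref{cor1}. That theorem needs $n\ge 3$, which I will assume as in Corollary \ref{cor1}. Because each iterate is a convex combination of points of $\Omega$, every $(s_k,t_k)$ stays in $\Omega$, and the bound (\ref{converge}) then follows directly.

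\textbf{Bounding $\Omega$.} For feasible $x$, set $y=U^Tx$, so that $\alpha\le y^Ty\le\beta$. Then $s=x^TAx=y^T\tilde A y$ lies between $\lambda_{\min}(A,C)\,y^Ty$ and $\lambda_{\max}(A,C)\,y^Ty$, because the eigenvalues of $\tilde A$ are the generalized eigenvalues of $(A,C)$. The signs of $\lambda_{\min}(A,C)$ and $\lambda_{\max}(A,C)$ are unknown, so the extremes over $y^Ty\in[\alpha,\beta]$ are the ones given in (\ref{eq5})--(\ref{eq6}). Hence $s\in[s_{\min},s_{\max}]$.

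Similarly, $t\in[\lambda_{\min}(B,C)\alpha,\ \lambda_{\max}(B,C)\beta]$, and $\lambda_{\min}(B,C)>0$ since $B\succ0$. So $\Omega$ lies in a rectangle whose diagonal is $D$, and any two points of $\Omega$ are at most $D$ apart.

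\textbf{$L$-smoothness.} Write $f(s,t)=s-\sqrt t$. The Hessian is $\mathrm{diag}(0,\tfrac14 t^{-3/2})$. On the convex set $\Omega$ we have $t\ge t_{\rm lo}:=\lambda_{\min}(B,C)\alpha>0$, so the Hessian's largest eigenvalue is at most $\tfrac14 t_{\rm lo}^{-3/2}=L$. Segments between points of $\Omega$ stay in $\Omega$, so Taylor's theorem with integral remainder along such a segment gives
\[
f(y)-f(x)\le\langle\nabla f(x),y-x\rangle+\tfrac{L}{2}\|y-x\|^2,
\]
which is Definition \ref{def1}.

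\textbf{Expected obstacle.} The only delicate point is that Theorem \ref{fw1} requires $L$-smoothness on $\Omega$ itself, whereas $f$ is not globally smooth near $t=0$. I handle this by restricting to the convex domain $\Omega$, where $t\ge t_{\rm lo}>0$ uniformly. This positive lower bound rests on Assumptions \ref{ass2} and \ref{ass3}, namely $B\succ0$ and $\alpha>0$. The exact line search variant is already covered by Theorem \ref{fw1}, so no separate treatment is needed.
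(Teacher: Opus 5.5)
Your proposal is correct relative to the paper's framework and follows its proof essentially step for step. Like the paper, you apply Theorem \ref{fw1} using convexity and compactness of $\Omega$ from the Annulus Brickman theorem, the rectangle $[s_{\min},s_{\max}]\times[\lambda_{\min}(B,C)\alpha,\lambda_{\max}(B,C)\beta]$ to bound the diameter, and the Hessian bound $\tfrac14 t^{-3/2}\le L$ from $t\ge\lambda_{\min}(B,C)\alpha>0$; you also make explicit the hypothesis $n\ge 3$ and the Taylor argument along segments inside $\Omega$, which the paper leaves implicit.

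One caveat that you inherit from the paper rather than introduce concerns indexing. The standard bound quoted as Theorem \ref{fw1} applies to iterates produced after a first step with $\gamma=1$, and the starting point itself carries no guarantee. Algorithm \ref{algfw}, however, counts an arbitrary $(s_1,t_1)\in\Omega$ as the $k=1$ iterate and, for the diminishing rule, uses $\gamma_1=2/3$. So (\ref{converge}) can fail for small $k$: for $n=3$, $A=\mathrm{diag}(-1,0,1)$, $B=C=I_3$, $\alpha=100$, $\beta=101$ and $(s_1,t_1)=(101,101)$, the initial gap is $202$ while $2LD^2/3\approx 6.8$. An index shift is needed; with $\gamma_k=2/(k+1)$ or exact line search one gets $f(s_k,t_k)-f(s^*,t^*)\le 2LD^2/(k+1)$ for $k\ge 2$.
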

\begin{proof}
The Algorithm \ref{algfw} for solving {\rm (HP)} inherits directly from the framework of the Frank-Wolfe algorithm, whose convergence analysis is standard \cite{P2024}. According to Theorem \ref{ab} and Corollary \ref{cor1}, $f(s,t)$ is a convex function and the set $\Omega$ defined in {\rm (HP)} is convex and compact. Therefore, according to Theorem \ref{fw1}, it suffices to determine an upper bound $D$ of the diameter of $\Omega$ and the smoothness constant $L$.

The diameter of $\Omega$ is:
\begin{eqnarray*}
D_{\Omega}&=&\max_{(s_1,t_1),(s_2,t_2)\in \Omega}\sqrt{(s_1-s_2)^2+(t_1-t_2)^2}\\
&\leq&\sqrt{(s_{\max}-s_{\min})^2+(t_{\max}-t_{\min})^2}\\
&=&(\ref{eqd})=D,
\end{eqnarray*}
where it holds in the inequality that
\begin{eqnarray*}
s_{\max}&=&\max_{\alpha\leq x^TCx\leq\beta}x^TAx=\left\{
\begin{aligned}
\lambda_{\max}(A,C)\beta,&~~~if~\lambda_{\max}(A,C)\geq 0,\\
\lambda_{\max}(A,C)\alpha,&~~~if~\lambda_{\max}(A,C)< 0,
\end{aligned}
\right.\label{eq1}\\
s_{\min}&=&\min_{\alpha\leq x^TCx\leq\beta}x^TAx=\left\{
\begin{aligned}
\lambda_{\min}(A,C)\alpha,&~~~if~\lambda_{\min}(A,C)\geq 0,\\
\lambda_{\min}(A,C)\beta,&~~~if~\lambda_{\min}(A,C)< 0.
\end{aligned}
\right.\label{eq2}
\end{eqnarray*}
Thus (\ref{eq5})-(\ref{eq6}) hold. Since $B\succ 0$, $C\succ 0$ and $0<\alpha<\beta<+\infty$, it holds that
\begin{eqnarray}
t_{\max}&=&\max_{\alpha\leq x^TCx\leq\beta}x^TBx=\lambda_{\max}(B,C)\beta>0,\label{eq3}\\
t_{\min}&=&\min_{\alpha\leq x^TCx\leq\beta}x^TBx=\lambda_{\min}(B,C)\alpha>0.\label{eq4}
\end{eqnarray}
Thus $D=(\ref{eqd})$. 

$L$ denotes the L-smooth constant corresponding to $f(s,t)$. It holds that 
\[
\nabla f(s,t)=\left(\begin{array}{c}
1\\
-\frac{1}{2\sqrt{t}}
\end{array}\right),~~~~
\nabla^2 f(s,t)=\left(\begin{array}{cc}
0&~~0\\
0&~~\frac{1}{4\sqrt{t^3}}
\end{array}\right).
\]
The minimum of $t$ is $t_{\min}$ as defined in (\ref{eq4}).
%, and 
%$
%t_{\min}=\min_{\alpha\leq x^TCx\leq \beta}x^TBx\geq \lambda_{\min}(B,C).
%$
Then 
\[
\|\nabla^2 f(s,t)\|= \left\vert\frac{1}{4\sqrt{t^3}}\right\vert
\leq\left\vert\frac{1}{4\sqrt{t_{\min}^3}}\right\vert
=\frac{1}{4(\lambda_{\min}(B,C)\alpha)^{3/2}}\triangleq L.
\]
According to Definition \ref{def1}, a twice-differentiable convex function with a uniformly bounded Hessian norm is L-smooth. Thus, $f(s,t)$ is $L$-smooth on $\Omega$ with constant $L$. 

Substituting the obtained constants $L$ and $D$ into Theorem \ref{fw1} yields (\ref{converge}). The proof is complete.
\end{proof}
%
%\begin{rem}
%The computational iteration is independent of the dimension $n$.
%\end{rem}
%\textcolor{blue}{
%According to the upper bound (\ref{converge}) in Theorem \ref{thm3}, we can conclude that in the following special cases, {\rm (QR)} may converge faster:} 
%
%(i)For fixed $D$, when $\alpha$ gets larger, then $L$ gets smaller and the upper bound (\ref{converge}) gets smaller; 
%
%(ii) When $\beta$ gets smaller, then $D$ gets smaller and the upper bound (\ref{converge}) gets smaller; 
%
%(iii) In {\rm (QR)}, when $\alpha\approx\beta$ and $B\approx C$, it holds that $\lambda_{\max}(B,C)\approx\lambda_{\min}(B,C)\approx1$. Then the smaller $\alpha$ (as well as $\beta$) is, the smaller (\ref{converge}) gets since 
%it holds that
%\begin{eqnarray*}
%\frac{2LD^2}{k+2}&\approx&\frac{2}{k+2}\times\frac{1}{4\alpha^{3/2}}\times (\lambda_{\max}(A,C)-\lambda_{\min}(A,C))^2\alpha^2\nonumber\\
%&=&\frac{\sqrt{\alpha}(\lambda_{\max}(A,C)-\lambda_{\min}(A,C))^2}{2(k+2)}.\label{ubs}
%\end{eqnarray*}

In Theorem \ref{thm3}, we have proved that the Algorithm \ref{algfw} converges with respect to the optimal value. In the following, we will further prove that the algorithm also converges with respect to the iterates. The expressions of $t_{\min}$, $t_{\max}$, $s_{\min}$, $s_{\max}$, $D$, $L$ that defined in (\ref{eql})-(\ref{eq4}) will be used throughout this paper.

\section{Iterate convergence of the Frank-Wolfe algorithm (Algorithm \ref{algfw}) for {\rm (HP)}}
In this section, we will prove that Algorithm \ref{algfw} converges with iterates. We first provide some lemmas that is essential for further analysis.
\begin{lemma}\label{lem1}
$(s^*,t^*)$ is the optimal solution of {\rm (HP)} if and only if it is the optimal solution of
\begin{eqnarray}
&\min\limits_{(s,t)\in \Omega}&d(s,t)= s-\frac{1}{2\sqrt{t^*}}t\label{BP}.
\end{eqnarray}
\end{lemma}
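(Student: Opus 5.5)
The plan is to use the first-order optimality characterization for convex problems, which applies here because Corollary \ref{cor1} and Theorem \ref{ab} make {\rm (HP)} a minimization of a differentiable convex function over a convex compact set $\Omega$. Note first that $f$ is smooth on a neighborhood of $\Omega$, since $t\geq t_{\min}=\lambda_{\min}(B,C)\alpha>0$ there. The key observation is that the objective $d(s,t)$ of (\ref{BP}) is exactly the linearization of $f$ at $(s^*,t^*)$ up to a constant:
\[
d(s,t)=\nabla f(s^*,t^*)^T\left(\begin{array}{c}s\\ t\end{array}\right),\qquad \nabla f(s^*,t^*)=\left(1,-\frac{1}{2\sqrt{t^*}}\right)^T .
\]
Hence $(s^*,t^*)$ minimizes $d$ over $\Omega$ if and only if $\nabla f(s^*,t^*)^T\big((s,t)-(s^*,t^*)\big)\geq 0$ for all $(s,t)\in\Omega$. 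This is the standard variational inequality.

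For the direction from (\ref{BP}) to {\rm (HP)}, assume $(s^*,t^*)\in\Omega$ minimizes $d$. Convexity of $f$ gives the gradient inequality
\[
f(s,t)\geq f(s^*,t^*)+\nabla f(s^*,t^*)^T\big((s,t)-(s^*,t^*)\big)\geq f(s^*,t^*)
\]
for every $(s,t)\in\Omega$, so $(s^*,t^*)$ is optimal for {\rm (HP)}. Concavity of $\sqrt{t}$ gives this concretely: $-\sqrt{t}\geq -\sqrt{t^*}-\frac{1}{2\sqrt{t^*}}(t-t^*)$.

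For the direction from {\rm (HP)} to (\ref{BP}), suppose $(s^*,t^*)$ is optimal for {\rm (HP)} but some $(s,t)\in\Omega$ has $d(s,t)<d(s^*,t^*)$. Since $\Omega$ is convex, the segment $(s^*,t^*)+\gamma\big((s,t)-(s^*,t^*)\big)$ lies in $\Omega$ for $\gamma\in[0,1]$. The directional derivative of $f$ along this direction is $d(s,t)-d(s^*,t^*)<0$, so $f$ strictly decreases for small $\gamma>0$. This contradicts the optimality of $(s^*,t^*)$.

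The only delicate point is ensuring that convexity of $\Omega$ and differentiability of $f$ are available. Both are already provided: convexity by Theorem \ref{ab} and Corollary \ref{cor1}, and differentiability by $t>0$ on $\Omega$. I do not expect any substantive obstacle beyond citing these facts.
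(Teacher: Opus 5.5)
Your proposal is correct and follows essentially the same route as the paper. Both arguments rest on the observation that $d$ is the linearization of $f$ at $(s^*,t^*)$, so optimality for {\rm (HP)} and optimality for the linear problem are each equivalent to the variational inequality $\nabla f(s^*,t^*)^T\big((s,t)-(s^*,t^*)\big)\geq 0$ on $\Omega$. The paper simply cites that this first-order condition is necessary and sufficient for convex programs. You instead verify the two directions by hand: the gradient inequality in one direction, and a feasible-descent argument using convexity of $\Omega$ in the other. This makes visible that only the {\rm (HP)}$\Rightarrow$(linear problem) direction actually needs convexity of $\Omega$.
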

\begin{proof}
Since {\rm (HP)} is a convex programming problem, the first order optimality condition is both necessary and sufficient. Thus $(s^*,t^*)$ is optimal for {\rm (HP)} if and only if 
\begin{eqnarray*}
\nabla f(s^*,t^*)^T\left(\begin{array}{c}
s-s^*\\
t-t^*
\end{array}\right)=\left(1,~-\frac{1}{2\sqrt{t^*}}\right)\cdot\left(\begin{array}{c}
s-s^*\\
t-t^*
\end{array}\right)
\geq 0,~~\forall(s,t)\in\Omega,
\end{eqnarray*}
which is equivalent to the following result that 
\begin{eqnarray}
s-\frac{1}{2\sqrt{t^*}}t\geq s^*-\frac{1}{2\sqrt{t^*}}t^*,~~\forall(s,t)\in\Omega.\label{con21}
\end{eqnarray}
Note that  
\[
\nabla d(s,t)=\left(\begin{array}{c}
1\\
-\frac{1}{2\sqrt{t^*}}
\end{array}\right)
\]
and (\ref{con21}) is precisely the optimality condition for problem (\ref{BP}). Therefore, $(s^*,t^*)$ is optimal for {\rm (HP)} if and only if it is optimal for (\ref{BP}). The proof is complete.
%
%Thus it holds for $\forall (s,t)\in\Omega$ that
%\begin{eqnarray}
%\nabla d(s^*,t^*)^T\left(\begin{array}{c}
%s-s^*\\
%t-t^*
%\end{array}\right)\geq 0,\label{con2}
%\end{eqnarray}
%which implies that $(s^*, t^*)$ is optimal for (\ref{BP}). The proof is complete.
\end{proof}

%\begin{theorem}\label{val1}
%Let $(s_k,t_k)$ be the sequences generated by the Algorithm \ref{algfw}, and let $(s^*,t^*)$ be the optimal solution of {\rm (HP)}. It holds that 
%\begin{eqnarray}
%s_k-\sqrt{t_k}-(s^*-\sqrt{t^*})\leq\frac{2LD^2}{k+2}.\label{res1}
%\end{eqnarray}
%%where $L$ and $D$ are defined as in (\ref{eql}) and (\ref{eqd}), respectively.
%\end{theorem}
%\begin{proof}
%According to Theorem \ref{thm3}, the result holds naturally.
%\end{proof}

\begin{lemma}\label{sq1}
Let $(s_k,t_k)$ be the sequences generated by the Algorithm \ref{algfw}, and let $(s^*,t^*)$ be the optimal solution of {\rm (HP)}. It holds that 
\begin{eqnarray}
\left\vert\sqrt{t_k}-\sqrt{t^*}\right\vert\leq\sqrt{\frac{4LD^2\sqrt{t_{\max}}}{k+2}}.\label{res2}
\end{eqnarray}
%where $t_{\max}$ is defined in (\ref{eq3}), $L$ and $D$ are defined in (\ref{eql}) and (\ref{eqd}), respectively.
\end{lemma}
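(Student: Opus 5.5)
The plan is to combine the primal gap bound of Theorem \ref{thm3} with the first-order optimality of $(s^*,t^*)$ from Lemma \ref{lem1}. Concavity of $\sqrt{t}$ then supplies a quadratic lower bound on the gap in terms of $(\sqrt{t_k}-\sqrt{t^*})^2$.

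\textbf{Step 1.} By Lemma \ref{lem1}, $(s^*,t^*)$ minimizes $d(s,t)=s-\frac{t}{2\sqrt{t^*}}$ over $\Omega$. Since $(s_k,t_k)\in\Omega$ (it is a convex combination of points in $\Omega$, and $\Omega$ is convex by Corollary \ref{cor1}), we get
\[
s_k-s^*\geq \frac{t_k-t^*}{2\sqrt{t^*}}.
\]

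\textbf{Step 2.} Write the gap as
\[
f(s_k,t_k)-f(s^*,t^*)=(s_k-s^*)-(\sqrt{t_k}-\sqrt{t^*})\geq \frac{t_k-t^*}{2\sqrt{t^*}}-(\sqrt{t_k}-\sqrt{t^*}).
\]
Set $a=\sqrt{t_k}$ and $b=\sqrt{t^*}$. The right-hand side becomes
\[
\frac{a^2-b^2}{2b}-(a-b)=\frac{(a-b)^2}{2b},
\]
which is an exact identity.

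\textbf{Step 3.} Combining Step 2 with Theorem \ref{thm3} gives $\frac{(\sqrt{t_k}-\sqrt{t^*})^2}{2\sqrt{t^*}}\leq \frac{2LD^2}{k+2}$. Hence
\[
(\sqrt{t_k}-\sqrt{t^*})^2\leq \frac{4LD^2\sqrt{t^*}}{k+2}\leq\frac{4LD^2\sqrt{t_{\max}}}{k+2},
\]
using $t^*\leq t_{\max}$ from (\ref{eq3}). Taking square roots yields (\ref{res2}).

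There is no real obstacle. The only point needing care is recognizing that the linearization inequality from Lemma \ref{lem1}, applied to the iterate, turns the gap into the perfect square $(a-b)^2/(2b)$. The exact identity in Step 2 avoids any strong-convexity argument, which would be unavailable since $f$ is linear in $s$. One should also confirm that $t^*>0$, so the division is legitimate; this holds because $t\geq t_{\min}>0$ on $\Omega$.
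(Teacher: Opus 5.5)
Your proof is correct and follows essentially the same route as the paper, which likewise applies the optimality inequality of Lemma \ref{lem1} at $(s_k,t_k)$, combines it with the gap bound of Theorem \ref{thm3}, and uses the same perfect-square identity to get $\frac{(\sqrt{t_k}-\sqrt{t^*})^2}{2\sqrt{t^*}}\le\frac{2LD^2}{k+2}$ before bounding $\sqrt{t^*}\le\sqrt{t_{\max}}$. Your explicit checks that $(s_k,t_k)\in\Omega$ and $t^*\ge t_{\min}>0$ fill in details the paper leaves implicit.
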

\begin{proof}
According to Lemma \ref{lem1}, 
\[
s^*-\frac{1}{2\sqrt{t^*}}t^*\leq s_k-\frac{1}{2\sqrt{t^*}}t_k
\]
holds. Thus, it holds that
\begin{eqnarray*}
0&\leq& s_k-s^*+\frac{1}{2\sqrt{t^*}}(t^*-t_k)\\
&\leq&\frac{1}{2\sqrt{t^*}}(t^*-t_k)+\sqrt{t_k}-\sqrt{t^*}+\frac{2LD^2}{k+2}\\
&=&-\frac{(\sqrt{t_k}-\sqrt{t^*})^2}{2\sqrt{t^*}}+\frac{2LD^2}{k+2},
\end{eqnarray*}
where the second inequality holds due to Theorem \ref{thm3}. As a result,
\[
(\sqrt{t_k}-\sqrt{t^*})^2\leq \frac{2LD^2}{k+2}\times 2\sqrt{t^*}\leq \frac{4LD^2}{k+2}\sqrt{t_{\max}}.
\]
Consequently, (\ref{res2}) holds. The proof is complete.
\end{proof}
Now we are ready to prove that the sequences $\{s_k\}$ and $\{t_k\}$ of Algorithm \ref{algfw} converges.
\begin{theorem}{\rm (Convergence of $\{s_k\}$)}
Let $(s_k,t_k)$ be the sequences generated by the Algorithm \ref{algfw}, and let $(s^*,t^*)$ be the optimal solution of {\rm (HP)}. It holds that 
\begin{eqnarray}
-\sqrt{\frac{4LD^2\sqrt{t_{\max}}}{k+2}}\leq s_k-s^*\leq \frac{2LD^2}{k+2}+\sqrt{\frac{4LD^2\sqrt{t_{\max}}}{k+2}}.\label{res5}
\end{eqnarray}
%where $t_{\max}$ is defined in (\ref{eq3}), $L$ and $D$ are defined in (\ref{eql}) and (\ref{eqd}), respectively.
\end{theorem}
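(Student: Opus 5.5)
The two bounds on $s_k-s^*$ come from different sources: the upper bound from the primal convergence estimate of Theorem \ref{thm3}, the lower bound from the optimality characterization of Lemma \ref{lem1}. In both cases the $t$-terms are controlled by Lemma \ref{sq1}. Throughout, write $\epsilon_k=\frac{2LD^2}{k+2}$ and $\delta_k=\sqrt{\frac{4LD^2\sqrt{t_{\max}}}{k+2}}$.

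\emph{Upper bound.} Theorem \ref{thm3} gives $s_k-\sqrt{t_k}-(s^*-\sqrt{t^*})\leq \epsilon_k$, which rearranges to
\[
s_k-s^*\leq \epsilon_k+\sqrt{t_k}-\sqrt{t^*}\leq \epsilon_k+\left\vert\sqrt{t_k}-\sqrt{t^*}\right\vert\leq \epsilon_k+\delta_k,
\]
where the last step is (\ref{res2}). This is exactly the right-hand inequality of (\ref{res5}).

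\emph{Lower bound.} The point $(s_k,t_k)$ lies in $\Omega$: it starts in $\Omega$, and each iterate is a convex combination of points of $\Omega$, which is convex by Corollary \ref{cor1}. Lemma \ref{lem1} therefore applies and gives
\[
s_k-s^*\geq \frac{t_k-t^*}{2\sqrt{t^*}}=\frac{(\sqrt{t_k}-\sqrt{t^*})(\sqrt{t_k}+\sqrt{t^*})}{2\sqrt{t^*}}.
\]
If $\sqrt{t_k}\geq\sqrt{t^*}$, the right-hand side is nonnegative, so $s_k-s^*\geq 0\geq-\delta_k$ and we are done.

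If $\sqrt{t_k}<\sqrt{t^*}$, then $0<\sqrt{t_k}+\sqrt{t^*}<2\sqrt{t^*}$. Since the first factor is negative, this gives
\[
s_k-s^*\geq \frac{(\sqrt{t_k}-\sqrt{t^*})(\sqrt{t_k}+\sqrt{t^*})}{2\sqrt{t^*}}\geq \sqrt{t_k}-\sqrt{t^*}\geq-\delta_k,
\]
again using (\ref{res2}).

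The only point needing care is this second case of the lower bound. Bounding $\frac{\sqrt{t_k}+\sqrt{t^*}}{2\sqrt{t^*}}$ crudely would bring in the ratio $\sqrt{t_{\max}/t_{\min}}$. The sign split avoids that, because when $t_k<t^*$ the ratio is below $1$ and the constant comes out exactly $1$. Combining the two bounds yields (\ref{res5}).
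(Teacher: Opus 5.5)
Your proof is correct. The upper bound is exactly the paper's argument: Theorem \ref{thm3} followed by (\ref{res2}).

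For the lower bound you take a slightly longer road than the paper. The paper uses only that $(s^*,t^*)$ globally minimizes $s-\sqrt{t}$ over $\Omega$ and that $(s_k,t_k)\in\Omega$. This gives $s_k-\sqrt{t_k}\ge s^*-\sqrt{t^*}$, i.e.\ $s_k-s^*\ge\sqrt{t_k}-\sqrt{t^*}$, and (\ref{res2}) finishes it with no case analysis. You instead use the variational inequality of Lemma \ref{lem1}, which yields the stronger intermediate bound $s_k-s^*\ge \frac{t_k-t^*}{2\sqrt{t^*}}$. Your sign split is then unnecessary, because for every $t_k$
\[
\frac{t_k-t^*}{2\sqrt{t^*}}-\bigl(\sqrt{t_k}-\sqrt{t^*}\bigr)=\frac{(\sqrt{t_k}-\sqrt{t^*})^2}{2\sqrt{t^*}}\ge 0 .
\]
This is just concavity of $\sqrt{\cdot}$ at $t^*$, and it collapses your argument to the paper's inequality in one line.

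Both routes are valid. The paper's is more economical and does not need first-order optimality at this step. Yours records a marginally sharper lower bound on $s_k-s^*$, which is then relaxed to the same constant. Your explicit check that $(s_k,t_k)\in\Omega$ by convexity is a step the paper uses only implicitly, so it is good that you state it.

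One notational point: your $\delta_k$ clashes with the paper's $\delta_k$ in (\ref{res7}), which is a different quantity.
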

\begin{proof}
According to Theorem \ref{thm3} and Lemma \ref{sq1}, it holds that 
\begin{eqnarray}
s_k-s^*\leq \frac{2LD^2}{k+2}+\sqrt{t_k}-\sqrt{t^*}\leq \frac{2LD^2}{k+2}+\sqrt{\frac{4LD^2\sqrt{t_{\max}}}{k+2}}.\label{res3}
\end{eqnarray}
Since $(s^*,t^*)=\arg\min\limits_{(s,t)\in\Omega}s-\sqrt{t}$, it holds that $s_k-\sqrt{t_k}\geq s^*-\sqrt{t^*}$. Hence 
\begin{eqnarray}
-\sqrt{\frac{4LD^2\sqrt{t_{\max}}}{k+2}}\leq\sqrt{t_k}-\sqrt{t^*}\leq s_k-s^*,\label{res4}
\end{eqnarray}
where the first inequality holds due to Lemma \ref{sq1}.
%Theorem \ref{thm3}
 Combining (\ref{res3}) and (\ref{res4}) yields (\ref{res5}). The proof is complete.
\end{proof}
\begin{theorem}{\rm (Convergence of $\{t_k\}$)}
Let $(s_k,t_k)$ be the sequences generated by the Algorithm \ref{algfw}, and let $(s^*,t^*)$ be the optimal solution of {\rm (HP)}. It holds that 
\begin{eqnarray}
\left\vert t_k-t^*\right\vert\leq4\sqrt{t_{\max}}\sqrt{\frac{LD^2\sqrt{t_{\max}}}{k+2}}.\label{res6}
\end{eqnarray}
\end{theorem}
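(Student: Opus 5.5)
The bound (\ref{res6}) follows from Lemma \ref{sq1} by converting the bound on the square roots into one on $t_k$ itself. I will factor the difference as
\[
t_k-t^*=(\sqrt{t_k}-\sqrt{t^*})(\sqrt{t_k}+\sqrt{t^*}),
\]
so that
\[
\left\vert t_k-t^*\right\vert=\left\vert\sqrt{t_k}-\sqrt{t^*}\right\vert\,(\sqrt{t_k}+\sqrt{t^*}).
\]

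The first factor is controlled directly by Lemma \ref{sq1}:
\[
\left\vert\sqrt{t_k}-\sqrt{t^*}\right\vert\leq\sqrt{\frac{4LD^2\sqrt{t_{\max}}}{k+2}}=2\sqrt{\frac{LD^2\sqrt{t_{\max}}}{k+2}}.
\]

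For the second factor, I first check that $(s_k,t_k)\in\Omega$ for every $k$. This holds by induction. The initial point lies in $\Omega$, each $(\hat s_k,\hat t_k)$ from Proposition \ref{prop1} lies in $\Omega$, and $\Omega$ is convex by Theorem \ref{ab} and Corollary \ref{cor1}. Since $\gamma_k\in[0,1]$, each update is a convex combination of points in $\Omega$, so it stays in $\Omega$. The optimal point $(s^*,t^*)$ lies in $\Omega$ by definition. By (\ref{eq3}), every $t$-coordinate of a point of $\Omega$ satisfies $0<t\leq t_{\max}$, so $\sqrt{t_k}+\sqrt{t^*}\leq 2\sqrt{t_{\max}}$.

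Multiplying the two estimates gives
\[
\left\vert t_k-t^*\right\vert\leq 2\sqrt{t_{\max}}\cdot 2\sqrt{\frac{LD^2\sqrt{t_{\max}}}{k+2}}=4\sqrt{t_{\max}}\sqrt{\frac{LD^2\sqrt{t_{\max}}}{k+2}},
\]
which is exactly (\ref{res6}).

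The only step that needs any care is confirming that the iterates stay in $\Omega$ when the step size is chosen by exact line search. This is fine because the exact line search in the Frank-Wolfe method is taken over $\gamma\in[0,1]$, so the convexity argument above still applies. The rest is arithmetic.
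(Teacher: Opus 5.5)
Your proof is correct and is essentially the paper's argument: factor $t_k-t^*=(\sqrt{t_k}-\sqrt{t^*})(\sqrt{t_k}+\sqrt{t^*})$, bound the first factor by Lemma \ref{sq1}, and bound the second by $2\sqrt{t_{\max}}$. Your induction showing that the iterates stay in $\Omega$ (so that $t_k\le t_{\max}$) is a step the paper uses without stating it.
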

\begin{proof}
Since 
\[
\left\vert t_k-t^*\right\vert=\left\vert (\sqrt{t_k}-\sqrt{t^*})(\sqrt{t_k}+\sqrt{t^*})\right\vert \leq 
\sqrt{\frac{4LD^2\sqrt{t_{\max}}}{k+2}}\times 2\sqrt{t_{\max}}=(\ref{res6}),
\]
where the inequality follows from Lemma \ref{sq1}. The proof is complete.
\end{proof}
\section{Iterative minimum generalized eigenpair (IMGE) algorithm for globally solving {\rm (QR)}}  
In this section, we first prove that the optimal solution $\hat{x}_k$ of the Frank-Wolfe subproblem ${(\text{GP}_k)}$ is the $O(1/\sqrt{k})$-approximate solution of {\rm (QR)} with the help of Lemmas \ref{lem1} and \ref{sq1}. Then, we provide the iterative minimum generalized eigenpair (IMGE) algorithm for globally solving {\rm (QR)}. 

\begin{lemma}{\rm ($\delta_k$-approximate solution of (\ref{BP}))}\label{lem8}
Let $(\hat{s}_k,\hat{t}_k)$ be the optimal solution of the Frank-Wolfe subproblem defined in (\ref{p4}). 
%} at the $k$-th iteration:
%\begin{eqnarray}
%\min\limits_{(s,t)\in\Omega} s-\frac{1}{2\sqrt{t_k}}t\label{BP2}
%\end{eqnarray} 
%and let $(\hat{s}_k,\hat{t}_k)$ be its optimal solution. 
Then $(\hat{s}_k,\hat{t}_k)$ is a $\delta_k$-approximate solution of (\ref{BP}) with
\begin{eqnarray}
\delta_k=\frac{t_{\max}}{t_{\min}}\sqrt{\frac{LD^2\sqrt{t_{\max}}}{k+2}}.\label{res7}
\end{eqnarray}
\end{lemma}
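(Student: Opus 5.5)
The plan is to bound the gap $d(\hat{s}_k,\hat{t}_k)-d(s^*,t^*)$ from both sides, where $d(s,t)=s-\frac{1}{2\sqrt{t^*}}t$ is the objective of (\ref{BP}).

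\textbf{Lower bound.} By Lemma \ref{lem1}, $(s^*,t^*)$ minimizes $d$ over $\Omega$. Since $(\hat{s}_k,\hat{t}_k)\in\Omega$, we get $v(\ref{BP})=d(s^*,t^*)\leq d(\hat{s}_k,\hat{t}_k)$ immediately.

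\textbf{Upper bound: compare the two linear objectives.} Write $c_k=\frac{1}{2\sqrt{t_k}}$ and $c^*=\frac{1}{2\sqrt{t^*}}$. Since $(\hat{s}_k,\hat{t}_k)$ minimizes $s-c_kt$ over $\Omega$ and $(s^*,t^*)\in\Omega$, we have $\hat{s}_k-c_k\hat{t}_k\leq s^*-c_kt^*$. Adding and subtracting $c_k\hat{t}_k$ inside $d(\hat{s}_k,\hat{t}_k)$ then gives
\[
d(\hat{s}_k,\hat{t}_k)=(\hat{s}_k-c_k\hat{t}_k)+(c_k-c^*)\hat{t}_k\leq d(s^*,t^*)+(c_k-c^*)(\hat{t}_k-t^*).
\]
So the gap is at most $|c_k-c^*|\,|\hat{t}_k-t^*|$. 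Both $\hat{t}_k$ and $t^*$ lie in $[t_{\min},t_{\max}]$ by (\ref{eq3})--(\ref{eq4}), so the second factor is at most $t_{\max}-t_{\min}\leq t_{\max}$.

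\textbf{Bounding $|c_k-c^*|$.} We have
\[
|c_k-c^*|=\frac{|\sqrt{t_k}-\sqrt{t^*}|}{2\sqrt{t_k}\sqrt{t^*}}.
\]
To bound the denominator below by $2t_{\min}$, we need $t_k\geq t_{\min}$. This holds because every Frank--Wolfe iterate is a convex combination of points of the convex set $\Omega$ (Theorem \ref{ab}, Corollary \ref{cor1}), starting from $(s_1,t_1)\in\Omega$; hence $(s_k,t_k)\in\Omega$. The same applies to $t^*$. Lemma \ref{sq1} then gives
\[
|\sqrt{t_k}-\sqrt{t^*}|\leq 2\sqrt{\frac{LD^2\sqrt{t_{\max}}}{k+2}},
\]
and therefore $|c_k-c^*|\leq \frac{1}{t_{\min}}\sqrt{\frac{LD^2\sqrt{t_{\max}}}{k+2}}$.

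\textbf{Conclusion.} Multiplying by $t_{\max}$ yields exactly $\delta_k$ in (\ref{res7}).

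The only delicate step is the decomposition of $d(\hat{s}_k,\hat{t}_k)$ that isolates the factor $(c_k-c^*)$. Once that is in place, the constants follow directly from Lemma \ref{sq1} and the bounds on $t$.
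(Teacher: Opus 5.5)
Your proof is correct and is essentially the paper's argument. The paper likewise combines the optimality of $(s^*,t^*)$ for (\ref{BP}) with the optimality of $(\hat{s}_k,\hat{t}_k)$ for (\ref{p4}) to bound the gap by $\frac{1}{2}(\hat{t}_k-t^*)\bigl(\frac{1}{\sqrt{t_k}}-\frac{1}{\sqrt{t^*}}\bigr)$. It then applies Lemma \ref{sq1} together with $t_{\min}\leq t_k,t^*,\hat{t}_k\leq t_{\max}$. Your remark that $t_k\geq t_{\min}$ because the Frank--Wolfe iterates stay in the convex set $\Omega$ makes explicit a step the paper uses only implicitly.
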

\begin{proof}
Let $(s^*,t^*)$ be the optimal solution of {\rm (HP)}. According to Lemma \ref{lem1}, $(s^*,t^*)$ is also optimal for (\ref{BP}). Thus
\begin{eqnarray}
\hat{s}_k-\frac{1}{2\sqrt{t^*}}\hat{t}_k\geq s^*-\frac{1}{2\sqrt{t^*}}t^*.\label{ieq3}
\end{eqnarray}
Since $(\hat{s}_k,\hat{t}_k)$ is optimal for (\ref{p4}), it holds that 
\begin{eqnarray}
s^*-\frac{1}{2\sqrt{t_k}}t^*\geq \hat{s}_k-\frac{1}{2\sqrt{t_k}}\hat{t}_k.\label{ieq4}
\end{eqnarray}
We further obtain that 
\begin{eqnarray}
0\leq \hat{s}_k-s^*+\frac{1}{2\sqrt{t^*}}(t^*-\hat{t}_k)&\leq&\frac{1}{2\sqrt{t_k}}(\hat{t}_k-t^*)+\frac{1}{2\sqrt{t^*}}(t^*-\hat{t}_k)\nonumber\\
&=&\frac{1}{2}(\hat{t}_k-t^*)(\frac{1}{\sqrt{t_k}}-\frac{1}{\sqrt{t^*}})\nonumber\\
&=&\frac{1}{2\sqrt{t_k}\sqrt{t^*}}(\hat{t}_k-t^*)(\sqrt{t^*}-\sqrt{t_k})\nonumber\\
&\leq&\frac{t_{\max}}{2t_{\min}}\sqrt{\frac{4LD^2\sqrt{t_{\max}}}{k+2}}=(\ref{res7}),\nonumber
\end{eqnarray}
where the first inequality holds due to (\ref{ieq3}), the second inequality holds due to (\ref{ieq4}), the last inequality holds due to (\ref{res2}). Thus, 
\[0\leq d(\hat{s}_k,\hat{t}_k)-d(s^*,t^*)\leq \delta_k.\] 
The proof is complete. 
\end{proof}
\begin{theorem}{\rm ($\delta_k$-approximate solution of {\rm (HP)})}\label{th8}
Let $(\hat{s}_k,\hat{t}_k)$ be the optimal solution of the Frank-Wolfe subproblem defined in (\ref{p4}). 
%If $(\hat{s}_k,\hat{t}_k)$ is a $\delta_k$-approximate solution of problem (\ref{BP}), 
Then $(\hat{s}_k,\hat{t}_k)$ is a $\delta_k$-approximate solution of {\rm (HP)} with $\delta_k$ defined in (\ref{res7}).
\end{theorem}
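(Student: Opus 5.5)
The plan is to compare the objective $f(s,t)=s-\sqrt{t}$ of {\rm (HP)} with the linearized objective $d(s,t)=s-\frac{1}{2\sqrt{t^*}}t$ of (\ref{BP}), and then use Lemma \ref{lem8}. The lower bound $f(\hat{s}_k,\hat{t}_k)\ge f(s^*,t^*)$ is immediate, because $(\hat{s}_k,\hat{t}_k)\in\Omega$ (Proposition \ref{prop1}) and $(s^*,t^*)$ minimizes $f$ over $\Omega$. So the whole task is the upper bound $f(\hat{s}_k,\hat{t}_k)-f(s^*,t^*)\le\delta_k$.

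For the upper bound I will use the exact decomposition
\[
f(\hat{s}_k,\hat{t}_k)-f(s^*,t^*)=\bigl[d(\hat{s}_k,\hat{t}_k)-d(s^*,t^*)\bigr]+\frac{(\sqrt{\hat{t}_k}-\sqrt{t^*})^2}{2\sqrt{t^*}} .
\]
It follows by writing $\frac{\hat{t}_k-t^*}{2\sqrt{t^*}}-(\sqrt{\hat{t}_k}-\sqrt{t^*})$ as a perfect square divided by $2\sqrt{t^*}$. The first bracket is at most $\delta_k$ by Lemma \ref{lem8}. It therefore remains to control the curvature term $(\sqrt{\hat{t}_k}-\sqrt{t^*})^2/(2\sqrt{t^*})$.

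A second route avoids $d$ and uses only the optimality (\ref{ieq4}) of $(\hat{s}_k,\hat{t}_k)$ for (\ref{p4}). Set $a=\sqrt{\hat{t}_k}$, $b=\sqrt{t^*}$ and $c=\sqrt{t_k}$. Then (\ref{ieq4}) gives $\hat{s}_k-s^*\le\frac{a^2-b^2}{2c}$, so
\[
f(\hat{s}_k,\hat{t}_k)-f(s^*,t^*)\le\frac{a^2-b^2}{2c}-(a-b)=\frac{(a-c)^2-(b-c)^2}{2c}\le\frac{(a-c)^2}{2c}.
\]
Lemma \ref{sq1} already controls $(b-c)^2$ at rate $O(1/k)$. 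Hence both routes reduce to one fact: $\sqrt{\hat{t}_k}$ is close to $\sqrt{t^*}$, or equivalently to $\sqrt{t_k}$, at the rate $O(k^{-1/4})$. Given that, the squared term is $O(1/\sqrt{k})$ and can be absorbed into a constant of the form $\frac{t_{\max}}{t_{\min}}\sqrt{LD^2\sqrt{t_{\max}}/(k+2)}$.

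\textbf{Main obstacle.} The hard step is showing that the Frank--Wolfe vertex $\hat{t}_k$ is itself close to $t^*$. Neither Lemma \ref{lem8} nor Lemma \ref{sq1} gives this directly, since a linear minimizer over $\Omega$ may jump along a flat face of $\Omega$. To get it, I would use that $(\hat{s}_k,\hat{t}_k)$ and $(s^*,t^*)$ are both boundary points of the compact convex set $\Omega$ (Theorem \ref{ab}). They minimize linear functionals whose normals $(1,-\frac{1}{2\sqrt{t_k}})$ and $(1,-\frac{1}{2\sqrt{t^*}})$ differ by $O(|\sqrt{t_k}-\sqrt{t^*}|)$, which is $O(k^{-1/2})$ by Lemma \ref{sq1}.

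Exposed points then depend continuously on the normal direction unless $\Omega$ has a face orthogonal to $(1,-\frac{1}{2\sqrt{t^*}})$. I would try to rule out such a face using the simple-eigenvalue Assumption \ref{ass1}: by Proposition \ref{prop1}, the minimizer of $(\text{GP}_k)$ is unique up to sign when the minimum generalized eigenvalue is simple. I would then try to turn this into a quantitative (Lipschitz-type) bound on $|\hat{t}_k-t^*|$ via eigenvector perturbation of the pencil $(M_k,C)$.

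If this perturbation argument only yields a constant depending on an eigengap, I would state the bound with that extra constant. The decomposition above shows that the curvature term cannot simply be dropped when passing from (\ref{BP}) to {\rm (HP)}.
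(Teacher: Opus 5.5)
Your identity for $f(\hat s_k,\hat t_k)-f(s^*,t^*)$ is correct, with a \emph{plus} sign in front of $\frac{(\sqrt{\hat t_k}-\sqrt{t^*})^2}{2\sqrt{t^*}}$. It shows exactly where the paper's own proof fails. The paper uses (\ref{fa}) with a \emph{minus} sign, drops the term, and concludes $f(\hat s_k,\hat t_k)-f(s^*,t^*)\le d(\hat s_k,\hat t_k)-d(s^*,t^*)$. That sign is wrong; the paper's own computation in the proof of Lemma \ref{sq1} has it right. In fact $f(s,t)-f(s^*,t^*)\ge d(s,t)-d(s^*,t^*)$ on $\Omega$, simply because $d(s,t)-d(s^*,t^*)$ is the linearization of the convex $f$ at $(s^*,t^*)$. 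So Lemma \ref{lem8} alone says nothing about {\rm (HP)}, and you were right that the curvature term cannot be discarded.

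The genuine gap is your ``main obstacle'', and it cannot be closed for the statement as given. Assumption \ref{ass1} does not rule out a flat optimal face of (\ref{BP}). A simple minimum generalized eigenvalue fixes the \emph{direction} of the minimizer of $(\text{GP}_k)$. In Case 1 of Proposition \ref{prop1} ($\lambda_g=0$), however, it does not fix the \emph{scale}. Suppose $M^*=A-\frac{1}{2\sqrt{t^*}}B\succeq 0$ with $\ker M^*=\mathrm{span}\{u\}$ and $u^TCu=1$. Then the solution set of (\ref{BP}) is the whole segment $\{\tau(u^TAu,u^TBu):\alpha\le\tau\le\beta\}$. This happens whenever the minimizer of {\rm (QR)} lies strictly inside the annulus.

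Here is a concrete instance. Take $n=3$, $A=\mathrm{diag}(\frac12,1,2)$, $B=C=I_3$, $\alpha=\frac14$, $\beta=4$; Assumptions \ref{ass1}--\ref{ass3} hold. Then $v({\rm QR})=-\frac12$ at $x^*=\pm e_1$, so $t^*=1$. The minimum eigenpair of $M_k=A-\frac{1}{2\sqrt{t_k}}I_3$ is $(\frac12-\frac{1}{2\sqrt{t_k}},e_1)$. Proposition \ref{prop1} therefore returns:
\begin{itemize}
\item $\hat x_k=\frac12 e_1$ with $q(\hat x_k)=-\frac38$ if $t_k>1$;
\item $\hat x_k=2e_1$ with $q(\hat x_k)=0$ if $t_k<1$.
\end{itemize}
Hence $f(\hat s_k,\hat t_k)-v({\rm HP})\ge\frac18$ whenever $t_k\ne t^*$, while $\delta_k\to 0$. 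With $\gamma_k=\frac{2}{k+2}$, $t_k\ne t^*$ holds for every $k$ unless $t_1$ lies in a countable exceptional set. The Frank--Wolfe vertex jumps between the two endpoints of the face, exactly the scenario you feared, and no perturbation argument can rescue it. The same example also refutes Theorem \ref{sol}.

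What your perturbation route does give is a corrected statement. Suppose the minimum generalized eigenvalue of $(M^*,C)$ is simple and nonzero (a boundary optimum with an active multiplier). Then for $\mu$ near $\mu^*=\frac{1}{2\sqrt{t^*}}$ you stay in Case 2 or 3. The vertex coordinate $\hat t(\mu)=\alpha v_g(\mu)^TBv_g(\mu)$ or $\beta v_g(\mu)^TBv_g(\mu)$ is smooth in $\mu$. This gives $|\hat t_k-t^*|=O(|\mu_k-\mu^*|)=O(|\sqrt{t_k}-\sqrt{t^*}|)=O(k^{-1/2})$ by Lemma \ref{sq1}. The curvature term is then $O(1/k)$ and the total gap is $O(1/\sqrt{k})$. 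The constant, however, depends on the eigengap of $(M^*,C)$; it is not the $\delta_k$ of (\ref{res7}).
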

\begin{proof}
Since $(\hat{s}_k,\hat{t}_k)$ is optimal for (\ref{p4}). According to Lemma \ref{lem8}, $(\hat{s}_k,\hat{t}_k)$ is a $\delta_k$-approximate solution of problem (\ref{BP}) that 
\begin{eqnarray}
d(\hat{s}_k,\hat{t}_k)-d(s^*,t^*)\leq\delta_k \label{da}
\end{eqnarray}
with $\delta_k$ defined in (\ref{res7}), and $(s^*,t^*)$ being the optimal solution of (\ref{BP}). And according to Lemma \ref{lem1}, $(s^*,t^*)$ is also optimal for {\rm (HP)}. Since for any $(s,t)\in\Omega$, it holds that
\begin{equation}
f(s,t)-f(s^*,t^*)=d(s,t)-d(s^*,t^*)-\frac{(\sqrt{t}-\sqrt{t^*})^2}{2\sqrt{t^*}}.\label{fa}
\end{equation}
Thus, we obtain
\[
f(\hat{s}_k,\hat{t}_k)-f(s^*,t^*)\leq d(\hat{s}_k,\hat{t}_k)-d(s^*,t^*)\leq \delta_k,
\]
where the first inequality follows from (\ref{fa}), the second inequality follows from (\ref{da}). Hence $(\hat{s}_k,\hat{t}_k)$ is further a $\delta_k$-approximate solution of {\rm (HP)}.
\end{proof}
%
%\textcolor{red}{Assume that (\ref{BP}) has the unique optimal solution. Then $(\hat{s}_k,\hat{t}_k)$ converges to $(s^*,t^*)$.}
%
%\textcolor{red}{The IMGE algorithm converges sublinearly in both objective value and iterates. Specifically,
%\begin{eqnarray*}
%% \nonumber % Remove numbering (before each equation)
%  f(s_k,t_k)-f(s^*,t^*) &=& \mathcal{O}(\frac{1}{k})? \\
%  \|(s_k,t_k)-(s^*,t^*)\| &=& \mathcal{O}(\frac{1}{\sqrt{k}})?
%\end{eqnarray*}
%}
\begin{theorem}{\rm ($\delta_k$-approximate solution of {\rm (QR)})}\label{sol}
Let $\hat{x}_k$ be the optimal solution of ${(\text{GP}_k)}$, then it is a $\delta_k$-approximate solution of {\rm (QR)} with $\delta_k$ defined in (\ref{res7}). 
\end{theorem}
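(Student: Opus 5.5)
The plan is to transfer Theorem \ref{th8} from the hidden variables $(s,t)$ back to the original variable $x$. The link between the two problems is that $q(x)=f(x^TAx,x^TBx)$ for every $x\in\mathcal{W}$. The argument has three steps: show that $\hat{x}_k$ is feasible for (QR), identify $q(\hat{x}_k)$ with $f(\hat{s}_k,\hat{t}_k)$, and identify $v{\rm (QR)}$ with $v{\rm (HP)}=f(s^*,t^*)$.

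\textbf{Feasibility.} By Proposition \ref{prop1}, $\hat{x}_k$ is an optimal solution of $(\text{GP}_k)$, so $\alpha\leq \hat{x}_k^TC\hat{x}_k\leq\beta$ and hence $\hat{x}_k\in\mathcal{W}$. In Case 2 we have $\hat{x}_k^TC\hat{x}_k=\alpha$, in Case 3 it equals $\beta$, and in Case 1 it lies strictly in between by construction. Moreover, the same proposition gives $(\hat{s}_k,\hat{t}_k)=(\hat{x}_k^TA\hat{x}_k,\hat{x}_k^TB\hat{x}_k)$. Consequently
\[
q(\hat{x}_k)=\hat{x}_k^TA\hat{x}_k-\sqrt{\hat{x}_k^TB\hat{x}_k}=\hat{s}_k-\sqrt{\hat{t}_k}=f(\hat{s}_k,\hat{t}_k).
\]

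\textbf{Equality of optimal values.} Next I verify $v{\rm (QR)}=v{\rm (HP)}$. For any $x\in\mathcal{W}$, the point $(x^TAx,x^TBx)$ lies in $\Omega$ and has $f$-value $q(x)$. Conversely, every point of $\Omega$ is of this form for some $x\in\mathcal{W}$, by the definition of $\Omega$. The two problems therefore have identical sets of attainable objective values, so their infima coincide. Both problems attain their optima: $\mathcal{W}$ is compact because $C\succ0$ and $\beta<\infty$, and $q$ is continuous because $B\succ0$. In particular $v{\rm (QR)}=f(s^*,t^*)$.

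\textbf{Conclusion.} Combining these facts with Theorem \ref{th8} gives
\[
v{\rm (QR)}=f(s^*,t^*)\leq f(\hat{s}_k,\hat{t}_k)=q(\hat{x}_k)\leq f(s^*,t^*)+\delta_k=v{\rm (QR)}+\delta_k .
\]
This is exactly the definition of a $\delta_k$-approximate solution. The only slightly delicate point is Case 1 of Proposition \ref{prop1}, where $\hat{x}_k$ is not unique. However, any admissible choice there is feasible and attains the same subproblem value, so the corresponding $(\hat{s}_k,\hat{t}_k)$ is an optimal solution of (\ref{p4}). Lemma \ref{lem8} and Theorem \ref{th8} used only this optimality, not uniqueness, so they apply to any such choice.
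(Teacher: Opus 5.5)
Your reduction is the same as the paper's proof: feasibility of $\hat{x}_k$, the identities $q(\hat{x}_k)=f(\hat{s}_k,\hat{t}_k)$ and $v{\rm (QR)}=v{\rm (HP)}$, then Theorem \ref{th8}. The bookkeeping steps are fine. The step that carries all the weight, $f(\hat{s}_k,\hat{t}_k)\le f(s^*,t^*)+\delta_k$, does not hold. Theorem \ref{th8} is derived from identity (\ref{fa}), and that identity has the wrong sign. A direct computation gives
\[
f(s,t)-f(s^*,t^*)=d(s,t)-d(s^*,t^*)+\frac{(\sqrt{t}-\sqrt{t^*})^2}{2\sqrt{t^*}},
\]
which is just the fact that the convex $f$ lies above its linearization at $(s^*,t^*)$. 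So Lemma \ref{lem8} controls only the smaller of the two gaps. To bound $f(\hat{s}_k,\hat{t}_k)-f(s^*,t^*)$ you would also need $\hat{t}_k\to t^*$. Nothing in your argument (or the paper's) provides this, and it fails in general: Frank--Wolfe vertices need not approach the optimum even when the iterates $(s_k,t_k)$ do.

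In fact the statement itself is false. Take $n=3$, $B=C=I_3$, $A={\rm diag}(a_1,a_2,a_3)$ with $0<a_1<a_2<a_3$ and $\alpha<\frac{1}{4a_1^2}<\beta$. Then $v{\rm (QR)}=\min_{\sqrt{\alpha}\le\rho\le\sqrt{\beta}}(a_1\rho^2-\rho)=-\frac{1}{4a_1}$, attained strictly inside the annulus, with $t^*=\frac{1}{4a_1^2}$. Whenever $t_k\neq t^*$ we have $\lambda_g=a_1-\frac{1}{2\sqrt{t_k}}\neq0$. Proposition \ref{prop1} then gives $\hat{x}_k=\pm\sqrt{\beta}e_1$ if $t_k<t^*$ and $\hat{x}_k=\pm\sqrt{\alpha}e_1$ if $t_k>t^*$. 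Hence
\[
q(\hat{x}_k)-v{\rm (QR)}=a_1\Bigl(\sqrt{\hat{t}_k}-\frac{1}{2a_1}\Bigr)^2\ge\min\Bigl\{a_1\Bigl(\sqrt{\alpha}-\frac{1}{2a_1}\Bigr)^2,\ a_1\Bigl(\sqrt{\beta}-\frac{1}{2a_1}\Bigr)^2\Bigr\}>0
\]
for every such $k$, while $\delta_k\to0$. Concretely, take $a_1=2^{-5/4}$ (so $t^*=\sqrt{2}$), $\alpha=\frac14$, $\beta=4$, $t_1=1$ (e.g.\ $x_0=e_2$) and $\gamma_k=\frac{2}{k+2}$. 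Then every $t_k$ is rational, so $t_k\neq t^*$ for all $k$, and the claimed bound is violated for all large $k$.

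The mechanism is general. Whenever the optimum $x^*$ of (QR) is interior to the annulus, set $v=x^*/\sqrt{x^{*T}Cx^*}$. The minimizers of (\ref{BP}) over $\Omega$ then contain the whole radial segment $\{r(v^TAv,v^TBv):\alpha\le r\le\beta\}$, and the subproblem solutions are pushed to its endpoints.

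What one can actually prove is $q(\hat{x}_k)-v{\rm (QR)}\le\delta_k+\frac{(\sqrt{\hat{t}_k}-\sqrt{t^*})^2}{2\sqrt{t^*}}$. Making the last term go to zero needs an extra hypothesis, for example that (\ref{BP}) has a unique minimizer over $\Omega$. That hypothesis yields $\hat{t}_k\to t^*$, but not the rate $\delta_k$ of (\ref{res7}).
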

\begin{proof}
Since $\hat{x}_k$ is the optimal solution of ${(\text{GP}_k)}$, the corresponding $(\hat{s}_k,\hat{t}_k)=(\hat{x}_k^TA\hat{x}_k,\hat{x}_k^TB\hat{x}_k)$ is optimal for (\ref{p4}). 
%According to Theorem \ref{th8}, $(\hat{s}_k,\hat{t}_k)$ is also a $\delta_k$-approximate solution of problem (\ref{BP}) that 
%\begin{eqnarray}
%d(\hat{s}_k,\hat{t}_k)-d(s^*,t^*)\leq\delta_k \label{da}
%\end{eqnarray}
%with $\delta_k$ defined in (\ref{res7}), and $(s^*,t^*)$ being the optimal solution of (\ref{BP}). And according to Lemma \ref{lem1}, $(s^*,t^*)$ is also optimal for {\rm (HP)}. Since for any $(s,t)\in\Omega$, it holds that
%\begin{equation}
%f(s,t)-f(s^*,t^*)=d(s,t)-d(s^*,t^*)-\frac{(\sqrt{t}-\sqrt{t^*})^2}{2\sqrt{t^*}}.\label{fa}
%\end{equation}
%Thus, we obtain
%\[
%f(\hat{s}_k,\hat{t}_k)-f(s^*,t^*)\leq d(\hat{s}_k,\hat{t}_k)-d(s^*,t^*)\leq \delta_k,
%\]
%where the first inequality follows from (\ref{fa}), the second inequality follows from (\ref{da}). 
Since $(\hat{s}_k,\hat{t}_k)$ is a $\delta_k$-approximate solution of {\rm (HP)} by Theorem \ref{th8}. It holds that
\begin{equation}
v{\rm (HP)}\leq f(\hat{s}_k,\hat{t}_k)\leq v{\rm (HP)}+\delta_k.\label{opt1}
\end{equation}
Insert the fact that 
$f(\hat{s}_k,\hat{t}_k)=\hat{x}_k^TA\hat{x}_k-\sqrt{\hat{x}_k^TB\hat{x}_k}=q(\hat{x}_k)$
and $v{\rm (HP)}=v{\rm (QR)}$ into (\ref{opt1}), we obtain
\begin{eqnarray*}
v{\rm (QR)}\leq q(\hat{x}_k)\leq v{\rm (QR)}+\delta_k,
\end{eqnarray*}
which indicates that $\hat{x}_k$ is a $\delta_k$-approximate solution of {\rm (QR)} with $\delta_k$ defined in (\ref{res7}). The proof is complete.
\end{proof}    

Theorem \ref{sol} indicates that $\hat{x}_k$ is a $\delta_k$-approximate solution of {\rm (QR)}, eliminating the need to solve the quadratic system (\ref{qua}). This leads to the following algorithm:

\begin{algorithm}[H]\caption{Iterative Minimum Generalized Eigenpair (IMGE) of $(M_k,C)$ for Globally Solving {\rm (QR)}.}\label{alg6}
    \textbf{Input:} $A\in\Bbb S^{n\times n}$, $B(\succ 0)\in\Bbb S^{n\times n}$, $C(\succ 0)\in\Bbb S^{n\times n}$, $\alpha\in \Bbb R,~\beta\in\Bbb R$ with $0<\alpha<\beta<\infty$. %Number of iterations $K$. 
    %Termination parameter $\Delta$.
    \\
    \textbf{Initialization:} $k=1$, $x_0\in\{x\in\Bbb R^n:\alpha\leq x^TCx\leq \beta\}$, $t_1=x_0^TBx_0$.
    %, $f_0=x_0^T(A-\frac{1}{2\sqrt{t_0+\epsilon}}B)x_0$, $\gamma_0=1$, 
    \\
    \textbf{Output:} The last iteration solution $\hat{x}_k$.
    \begin{algorithmic}[1]
        \While{termination criterion not satisfied}
        \State $M_k=A-\frac{1}{2\sqrt{t_k}}B$.
        \State Solve the optimal solution $\hat{x}_k$ of $(\text{GP}_k)$ via Proposition \ref{prop1}. %$(\hat{s}_k,\hat{t}_k)=(\hat{x}_k^TA\hat{x}_k,\hat{x}_k^TB\hat{x}_k)$ via Proposition \ref{prop1}.
        \State Update stepsize $\gamma_k$.
        \State 
        Update $t_{k+1}=(1-\gamma_k)t_k+\gamma_k\hat{t}_k$ with $\hat{t}_k=\hat{x}_k^TB\hat{x}_k$.
        \State Update $k:=k+1$.
        \EndWhile
    \end{algorithmic}        
\end{algorithm}
In each iteration of Algorithm \ref{alg6}, the main task is to solve $(\text{GP}_k)$, whose computational core lies in finding the minimum generalized eigenpair of the matrix pencil $(M_k,C)$. This motivates the name ``Iterative Minimum Generalized Eigenvalue (IMGE) Algorithm''. Moreover, only $t_k$ needs to be updated in each iteration.

\section{Numerical Experiments}
In this section, we compare the computational efficiency for solving {\rm (QR)} between the IMGE algorithm (Algorithm \ref{alg6}) with different stepsizes and {\rm (LC)} with CVX. All the numerical tests are implemented in Matlab R2020b and run on a laptop with Intel(R) Core(TM) i7-10710U CPU @ 1.10GHz 1.61 GHz processor and 16 GB RAM. 
\subsection{Some notations for the IMGE algorithm}
In this subsection, we state some notations for the IMGE algorithm:

(1) Initialization of $x_0$.
%$\hat{x}_0\in\{x\in\Bbb R^n:\alpha\leq x^TCx\leq \beta\}$. 
Choose an eigenvalue $\lambda$ of the matrix $C$ satisfying $\alpha<\lambda<\beta$ and set $x_0$ as the corresponding unit eigenvector. Then $\alpha\leq x_0^TCx_0\leq \beta$ holds automatically.

(2) Stepsize. We will mainly consider the following two types of stepsize in each iteration. 

(i) Diminishing stepsize. $\gamma_k^{dim}=\frac{2}{k+2}$. 

(ii) Exact line search ($\gamma_k^{exa}=\arg\min\limits_{\gamma\in[0,1]}\phi(\gamma)$). Consider the following one-dimensional programming problem:
\begin{eqnarray*}
\min\limits_{\gamma\in[0,1]} \phi(\gamma)&=&s_k+\gamma(\hat{s}_k-s_k)-\sqrt{t_k+\gamma(\hat{t}_k-t_k)}\\
&=&(1-\gamma)s_k+\gamma\hat{s}_k-\sqrt{(1-\gamma)t_k+\gamma \hat{t}_k},
\end{eqnarray*}
%
%\begin{eqnarray*}
%\arg\min\limits_{\gamma\in[0,1]} \phi(\gamma)=(1-\gamma)s_k+\gamma y_k-\sqrt{(1-\gamma)t_k+\gamma z_k},
%\end{eqnarray*}
where $\phi(\gamma)$ is convex with respect to $\gamma$. And 
\[
\phi'(\gamma)=\hat{s}_k-s_k-\frac{\hat{t}_k-t_k}{2\sqrt{t_k+(\hat{t}_k-t_k)\gamma}}.
\]
Then according to $\phi'(\gamma)=0$, we have $\gamma=\frac{\hat{t}_k-t_k}{4(\hat{s}_k-s_k)^2}-\frac{t_k}{\hat{t}_k-t_k}$. Thus, if $\hat{s}_k-s_k>0$ and $\hat{t}_k-t_k<0$, then $\gamma_k^{exa}=0$; if $\hat{s}_k-s_k<0$ and $\hat{t}_k-t_k>0$, then $\gamma_k^{exa}=1$; otherwise, $\gamma_k^{exa}=\gamma$.

(3) Stopping criterion. We adopt the Frank-Wolfe gap \cite{P2024} as the stopping criterion that we stop the algorithm when  
\[-\nabla f(s_k,t_k)^T\left(\begin{array}{c}
\hat{s}_k-s_k\\
\hat{t}_k-t_k
\end{array}\right)=s_k-\hat{s}_k+\frac{\hat{t}_k}{2\sqrt{t_k}}-\frac{\sqrt{t_k}}{2}\leq\Delta
\] 
holds or it reaches the predefined maximum iteration. 

(4) Minimum generalized eigenvalue decomposition. There are several methods for solving the generalized eigenvalue and the associated normalized generalized eigenvector, including the gradient method, the power method, the Lanczos method, and the preconditioned mirror descent method \cite{LX2025siam}. In this paper, we apply \textbf{min(eig)} function built in Matlab.

\subsection{Compare the computational efficiency for solving {\rm (QR)} among 3 methods}
In this subsection, we compare the computational efficiency for solving {\rm (QR)} among the following three methods: 

(i) Solve {\rm (QR)} through {\rm (LC)} with CVX. We use the default solver termination criteria (relative duality gap $\leq 10^{-6}$). We name this method by $M_{CVX}$; 

(ii) Apply the IMGE Algorithm with diminishing stepsize $\gamma^{dim}_k=\frac{2}{k+2}$. We name this method by $M_{dim}$;

(iii) Apply the IMGE algorithm with the stepsize obtained through exact line search $\gamma^{exa}_k$. We name this method by $M_{exa}$. 

We randomly generate the instances $A\in\Bbb S^{n\times n}$, $B\succ 0$, $C\succ 0$ with normal distribution and set $\alpha=1$, $\beta=10$. We denote the computational time (in seconds) of $M_{CVX}$, $M_{dim}$, $M_{exa}$ by $t_{CVX}$, $t_{dim}$, $t_{exa}$, respectively. Denote the iterations of $M_{dim}$ and $M_{exa}$ by $I_{dim}$, $I_{exa}$, respectively. Denote the optimal value obtained through $M_{CVX}$, $M_{dim}$, $M_{exa}$ by $v_{CVX}$, $v_{dim}$, $v_{exa}$, respectively. We use $\Delta$ and $Ite_{\max}$ to denote the termination tolerance and the predefined maximum iteration for the IMGE algorithm, respectively. We use $gap_{dim}$ and $gap_{exa}$ to denote the Frank Wolfe gap of $M_{dim}$ and $M_{exa}$, respectively. 
%All the computational time and iterations are reported with the average of 10 instances.

The computational time (in seconds), iterations, and the Frank-Wolfe gap for different dimensions ($N$) that varies from 100 to 700 are reported in Table \ref{time}. We set $\Delta=10^{-6}$ and $Ite_{\max}=2000$. It can be observed that for different dimensions, solving {\rm (QR)} through the IMGE algorithm with exact line search (i.e., $M_{exa}$) always outperforms the other two methods. 
%Additionally, for $N=700$, the reported average CPU time $t_{CVX}=1184.48$ seconds represents the average of 4 successful runs out of 5 attempts. The remaining 1 instance failed to produce a valid result. 
Additionally, for the fourth instance with $N=700$, the reported optimal value by CVX is $v_{CVX}=-363.7221$, which is significantly larger than $v_{exa}=-602.7299$. Moreover, the CVX solver reports non-convergence in the first two stages for this instance, which indicates that CVX solver fails to produce a valid result. Thus, the reported average CPU time $t_{CVX}=884.3284$ is computed over four successful runs out of five attempts.
%represents the average of 4 successful runs out of 5 attempts. 
%The remaining 1 instance failed to produce a valid result. 
Furthermore, when the dimension reaches 500, the IMGE algorithm with both diminishing and exact line search methods outperforms CVX solver. Thus, we only compare the computational time and iterations between methods $M_{exa}$ and $M_{dim}$ for the dimension above 700 in the following analysis. 
    \begin{table}[htbp]
        \centering
        \caption{Computational efficiency with $\Delta=10^{-6}$ and $Ite_{\max}=2000$.}        \label{time}
       % \small
        \setlength{\tabcolsep}{3pt}
        \begin{tabular}{c|cc|cccc|cccc}
             \hline
        $N=100$	&	$t_{CVX}$	&	$v_{CVX}$	&	$t_{dim}$	&	$I_{dim}$	&	$v_{dim}$	&	$gap_{dim}$	&	$t_{exa}$	&	$I_{exa}$	&	$v_{exa}$	&	$gap_{exa}$	\\
        \hline
1	&	4.88 	&	-229.2963	&	2.64 	&	2000	&	-229.2961	&	1.43e-04	&	0.02 	&	5	&	-229.2963	&	 5.21e-08	\\
2	&	2.18 	&	-222.0871	&	2.44 	&	2000	&	-222.0870	&	1.12e-04	&	0.03 	&	11	&	-222.0871	&	6.60e-08	\\
3	&	2.17 	&	-230.3269	&	2.23 	&	2000	&	-230.3268	&	1.32e-04	&	0.01 	&	5	&	-230.3270	&	3.81e-07	\\
4	&	1.84 	&	-133.1845	&	2.32 	&	2000	&	-133.1844	&	9.85e-05	&	0.01 	&	7	&	-133.1845	&	5.73e-07	\\
5	&	1.95 	&	-254.9110	&	2.34 	&	2000	&	-254.9109	&	1.13e-04	&	0.01 	&	4	&	-254.9110	&	2.09e-07	\\
avg	&	2.60 	&	-213.96116	&	2.39 	&	2000	&	-213.96104	&	1.2e-04	&	0.02 	&	6.4	&	-213.96118	&	2.57e-07	\\
\hline
N=300	&	$t_{CVX}$	&	$v_{CVX}$	&	$t_{dim}$	&	$I_{dim}$	&	$v_{dim}$	&	$gap_{dim}$	&	$t_{exa}$	&	$I_{exa}$	&	$v_{exa}$	&	$gap_{exa}$	\\
\hline
1	&	13.52 	&	-278.7148	&	17.57 	&	2000	&	-278.7147	&	 1.43e-04	&	0.06 	&	5	&	-278.7148	&	 2.56e-07	\\
2	&	13.37 	&	-486.7702	&	17.00 	&	2000	&	-486.7700	&	2.43e-04 	&	0.07 	&	5	&	-486.7702	&	  4.63e-09 	\\
3	&	12.35 	&	-381.4721	&	16.83 	&	2000	&	-381.4720	&	1.90e-04	&	0.06 	&	5	&	-381.4721	&	1.87e-07	\\
4	&	13.67 	&	-397.9059	&	19.40 	&	2000	&	-397.9060	&	2.51e-04	&	0.09 	&	6	&	-397.9063	&	6.41e-09 	\\
5	&	11.85 	&	-357.8252	&	16.72 	&	2000	&	-357.8251	&	 1.70e-04	&	0.07 	&	5	&	-357.8252	&	 1.81e-07 	\\
Average	&	12.95 	&	-380.53764	&	17.51 	&	2000	&	-380.53756	&	 1.99e-04	&	0.07 	&	5.2	&	-380.53772	&	 1.27e-07 	\\
\hline
N=500	&	$t_{CVX}$	&	$v_{CVX}$	&	$t_{dim}$	&	$I_{dim}$	&	$v_{dim}$	&	$gap_{dim}$	&	$t_{exa}$	&	$I_{exa}$	&	$v_{exa}$	&	$gap_{exa}$	\\
\hline
1	&	106.61 	&	-469.0959	&	65.63 	&	2000	&	-469.0957	&	2.96e-04	&	0.22 	&	5	&	-469.0960	&	 1.00e-08	\\
2	&	96.95 	&	-493.2039	&	69.92 	&	2000	&	-493.2037	&	2.37e-04	&	0.27 	&	5	&	-493.2040	&	3.33e-09	\\
3	&	93.58 	&	-450.6011	&	52.13 	&	2000	&	-450.6008	&	2.47e-04	&	0.24 	&	6	&	-450.6011	&	4.91e-09	\\
4	&	94.34 	&	-628.0649	&	69.97 	&	2000	&	-628.0646	&	2.79e-04	&	0.17 	&	4	&	-628.0649	&	1.52e-07	\\
5	&	90.52 	&	-445.1109	&	55.80 	&	2000	&	-445.1106	&	2.46e-04	&	0.23 	&	6	&	-445.1109	&	5.51e-09 	\\
Average	&	96.40 	&	-497.21534	&	62.69 	&	2000	&	-497.21508	&	2.61e-04	&	0.23 	&	5.2	&	-497.21538	&	 3.52e-08	\\
\hline
N=700	&	$t_{CVX}$	&	$v_{CVX}$	&	$t_{dim}$	&	$I_{dim}$	&	$v_{dim}$	&	$gap_{dim}$	&	$t_{exa}$	&	$I_{exa}$	&	$v_{exa}$	&	$gap_{exa}$	\\
\hline
1	&	949.56	&	-547.4755	&	192.68	&	2000	&	-547.4753	&	2.93e-04	&	0.61	&	5	&	-547.4755	&	2.08e-07	\\
2	&	891.37	&	-545.4861	&	231.98	&	2000	&	-545.4858	&	3.38e-04	&	0.56	&	5	&	-545.4861	&	2.30e-08	\\
3	&	898.51	&	-519.5839	&	163.92	&	2000	&	-519.5837	&	2.68e-04	&	0.76	&	6	&	-519.5839	&	1.37e-08	\\
4	&	820.03	&	\textbf{-363.7221}	&	136.31	&	2000	&	\textbf{-602.7296}	&	3.00e-04	&	0.62	&	5	&	\textbf{-602.7299}	&	5.68e-09	\\
5	&	797.88	&	-522.1429	&	235.02	&	2000	&	-522.1426	&	2.98e-04	&	0.60	&	5	&	-522.1429	&	3.61e-07	\\
Average	&	\textbf{884.33}	&	-499.6821	&	191.98	&	2000	&	-547.4834	&	2.99e-04	&	0.63	&	5.2	&	-547.48366	&	1.22e-07	\\
        \hline
        \end{tabular}
    \end{table}
%\end{landscape}

We can observe from Table \ref{time} that with a high-precision tolerance (e.g., $\Delta = 10^{-6}$), the diminishing stepsize method consistently reaches its iteration limit ($Ite_{\max}$). To allow for a meaningful comparison, we set $\Delta = 10^{-3}$ and report the results in Table \ref{tab2} with $Ite_{max}=2000$. The values are averaged over five instances. In this case, the diminishing stepsize method $M_{dim}$ terminates before reaching $Ite_{\max}$. For the same dimension, the method $M_{exa}$ still requires less time and fewer iterations than $M_{dim}$. Moreover, the number of iterations does not exhibit a clear pattern as the dimension varies. 
\begin{table}[h]
\centering
\captionsetup{justification=centering}
\caption{\centering Computational efficiency with $\Delta=10^{-3}$ and $Ite_{\max}=2000$.}\label{tab2} %for different Delta?
\begin{tabular}{c|cccccc}
\hline
$N$	&	900	&	1100	&	1300	&	1500	&	1700	&	1900	\\
 \hline
$t_{dim}$	&	243.38	&	649.08	&	916.39	&	1201.79	&	1729.92	&	2243.68	\\
$t_{exa}$	&	1.37	&	2.74	&	3.27	&	4.07	&	6.77	&	8.28	\\
\hline
$I_{dim}$	&	1147.4	&	1144	&	1217.6	&	1245	&	1211.8	&	1272.6	\\
$I_{exa}$	&	5.0	&	4.0	&	4.4	&	4.0	&	4.0	&	4.4	\\
\hline
\end{tabular}
\end{table}

\subsection{Analysis of the convergence of $(s_k,t_k)$}
We illustrate the convergence behavior of $(s_k,t_k)$ with Example 1 ($N=200$, $\Delta=10^{-4}$). %to see the variation of the objective value of ${\rm (HP)}$ with the number of iterations. 
The method $M_{dim}$ reaches the predefined maximum iteration limit (2000), while $M_{exa}$ converges after 7 iterations. Figure \ref{figure11} shows the evolution of $f(s_k,t_k)$ over the first 15 iterations, clearly indicating faster convergence of $M_{exa}$. In contrast, the method $M_{dim}$ with diminishing stepsize $\gamma_k^{dim}$ decays slowly, leading to slower progress. 

The advantage of $M_{exa}$ stems from its exact line search. Although the initial stepsize is small, it quickly becomes 1 after a few iterations (see Figure \ref{stepsize11}). This behavior is consistent across multiple instances. 
We conduct 50 experiments of $M_{exa}$ with varying dimensions. Among them, there are 3, 19, 16, 5, 7 groups of instances converge in 3, 4, 5, 6, 7 iterations, respectively. In all these cases, the stepsizes of the last few iterations always tend to be 1, as illustrated in the box plots in Figure \ref{fig:five} (only the cases of 4 and 5 iterations are shown for brevity, the remaining cases exhibit a similar pattern). Thus, the method $M_{exa}$ converges significantly faster compared with $M_{dim}$.

%
%The reason is that the objective value of the method $M_{dim}$ becomes quite large after the first iteration. And the stepsize $\gamma_k^{dim}$ gets smaller with the increasing iterations. While although the stepsize $\gamma_{k}^{exa}$ of $M_{exa}$ is quite small at first, it becomes 1 after a few steps as shown in Figure \ref{stepsize11}. 
%This phenomenon is not accidental. 
%
%We can observe that although the problems may be solved with different iterations, the stepsizes of the last few iterations always tend to be 1. 

%\begin{figure}[H]
%\centering
%\includegraphics[width=0.5\linewidth]{figf1}
%\caption{Variation of $f(s_k,t_k)$ with iterations}\label{fig3}
%\end{figure}
%\begin{figure}[H]
%\centering
%\includegraphics[width=0.5\linewidth]{figf4}
%\caption{Variation of stepsizes with iterations}\label{figf4}
%\end{figure}

\begin{figure}[htbp]
    \centering
    \begin{minipage}[b]{0.48\textwidth}
        \centering
        \includegraphics[width=\textwidth]{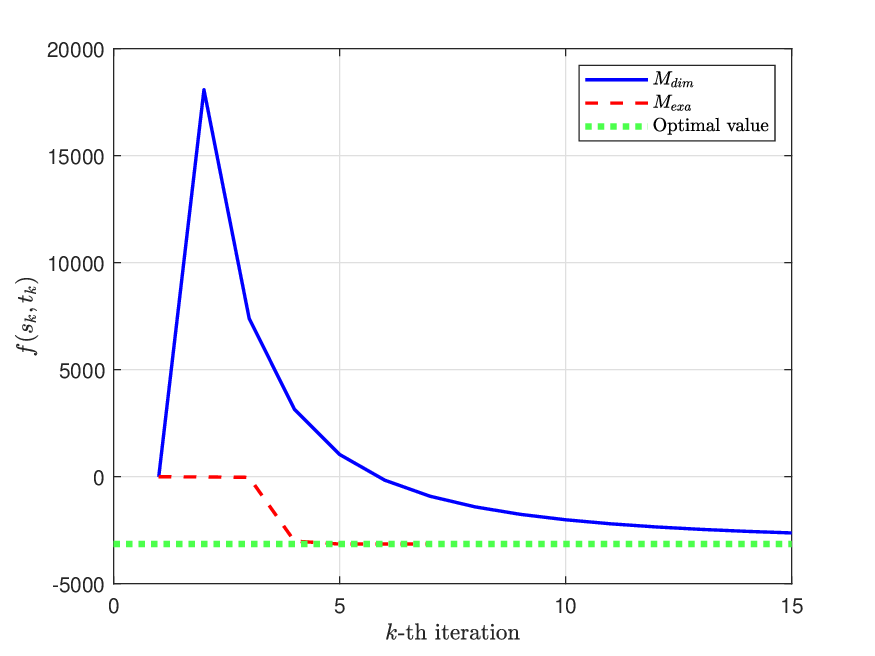}
        \captionof{figure}{Variation of $f(s_k,t_k)$}\label{figure11}
    \end{minipage}
    \hfill
    \begin{minipage}[b]{0.48\textwidth}
        \centering
        \includegraphics[width=\textwidth]{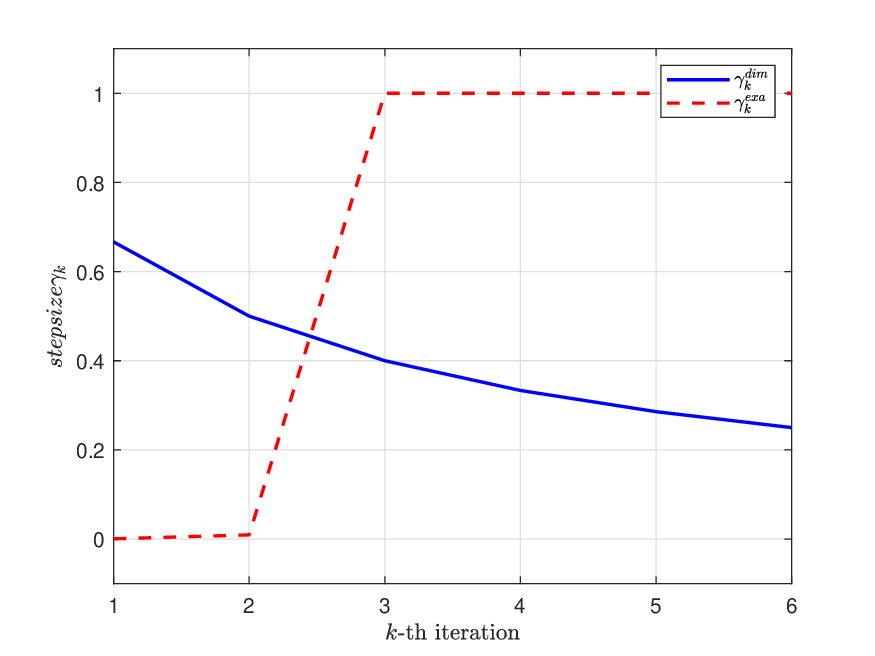}
        \captionof{figure}{Variation of stepsizes}\label{stepsize11}
    \end{minipage}\hfill
\end{figure}

\begin{figure}[htbp]
    \centering
   % \begin{minipage}[b]{0.5\textwidth}
%        \centering
%        \includegraphics[width=\textwidth]{ite3}
%        \captionof{subfigure}{3 iterations-3 groups of data}
%    \end{minipage}\hfill
    \begin{minipage}[b]{0.5\textwidth}
        \centering
        \includegraphics[width=\textwidth]{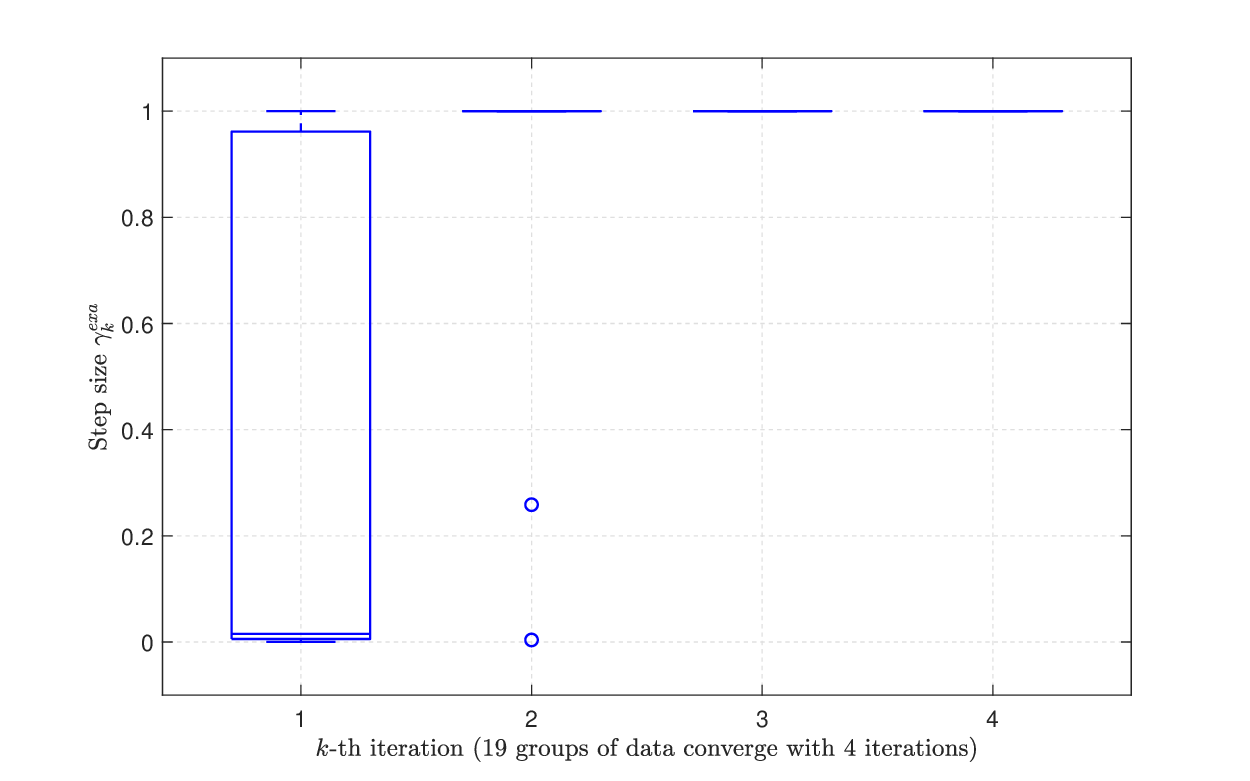}
        \captionof{subfigure}{4 iterations-19 groups of data}
    \end{minipage}\hfill
    \begin{minipage}[b]{0.5\textwidth}
        \centering
        \includegraphics[width=\textwidth]{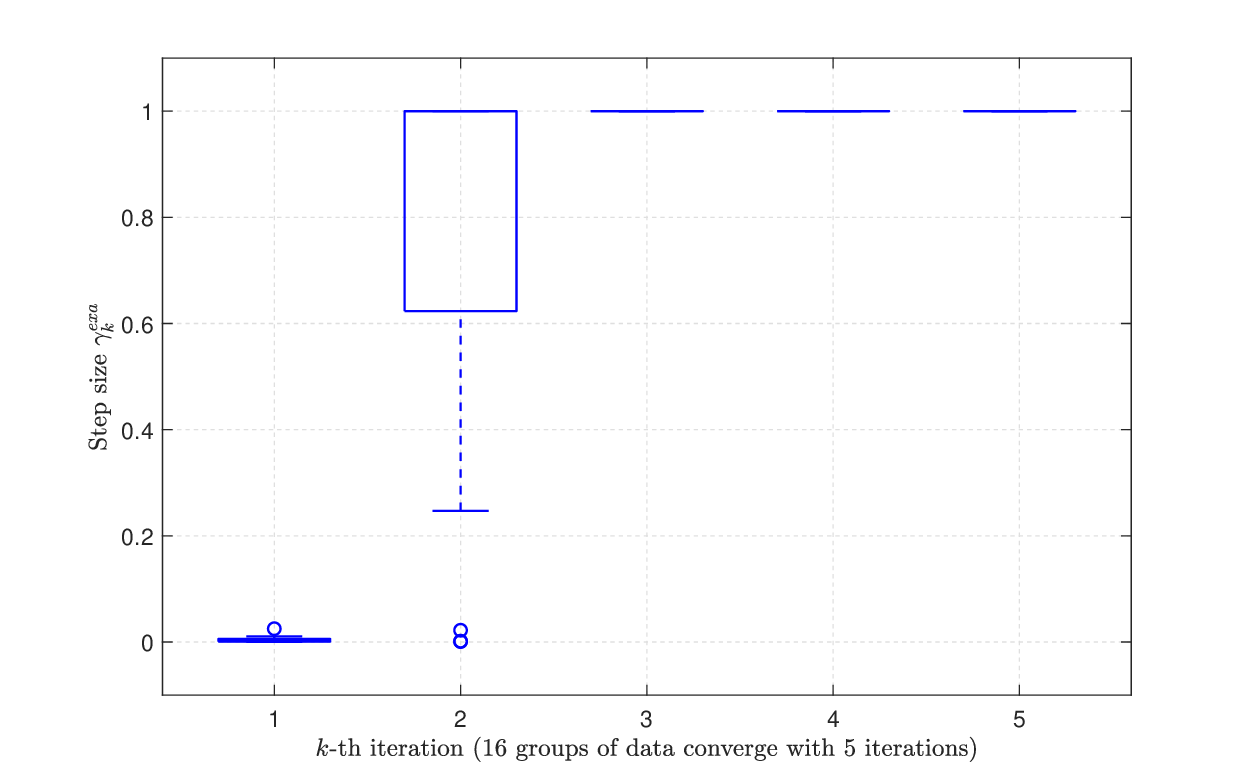}
        \captionof{subfigure}{5 iterations-16 groups of data}
    \end{minipage}\hfill
 %   \begin{minipage}[b]{0.5\textwidth}
%        \centering
%        \includegraphics[width=\textwidth]{ite6}
%        \captionof{subfigure}{6 iterations-5 groups of data}
%    \end{minipage}\hfill
%    \begin{minipage}[b]{0.5\textwidth}
%        \centering
%        \includegraphics[width=\textwidth]{ite7}
%        \captionof{subfigure}{7 iterations-7 groups of data}
%    \end{minipage}
    \caption{Variation of the stepsizes in each iteration}
    \label{fig:five}
\end{figure}
\subsection{Analysis of the convergence of $q(\hat{x}_k)$ (and $f(\hat{s}_k,\hat{t}_k)$)} %in objective value
In this subsection, we apply Example 1 to illustrate the convergence of $f(\hat{s}_k,\hat{t}_k)$ and $q(\hat{x}_k)$, which are the objective value of problems {\rm (HP)} and {\rm (QR)}, respectively. Since $f(\hat{s}_k,\hat{t}_k)=q(\hat{x}_k)$, we only compare $q(\hat{x}_k)$ with different methods $M_{dim}$ and $M_{exa}$. 
% We can see from Figure \ref{figf2} that $\hat{x}_k$ converges to the optimal value of {\rm (QR)} with both methods. 
The results show that $\hat{x}_k$ converges to the optimal value of {\rm (QR)} with both methods, which coincide with the theory of Theorem \ref{th8} and Theorem \ref{sol}. 
%Besides, it can be seen from Figure \ref{figf3} that the method $M_{exa}$ converges obviously faster. 
To highlight the trend, we scale the function values by $q_s(\hat{x}_k)=q(\hat{x}_k)*10^4$. As shown in Figure \ref{figf3}, although $q_s(\hat{x}_k)$ of $M_{exa}$ is initially larger (see Figure \ref{figf3} (left)), it turns much smaller from the fourth iteration and converges rapidly (see Figure \ref{figf3} (right)). The acceleration is due to the fact that the stepsize of $M_{exa}$ always tends to be 1 after a few steps (see Figure \ref{fig:five}). Besides, the update rule is $t_{k+1}=(1-\gamma_k^{exa})t_k+\gamma_k^{exa}\hat{t}_k$, which tends to be $t_{k+1}=\hat{t}_k$ when $\gamma_k^{exa}=1$. Thus, the algorithm is speeded up significantly. 
%\textcolor{red}{BUT dim stepsize is not always 1, how to explain its convergence with $\hat{x}_k???$ And when k gets quite large, $\gamma_k^{dim}$ is even close to 0???, then $t_{k+1}=t_k$???}
%
% \textcolor{red}{Figure \ref{fig:five} can also be used to explain the convergence of $(\hat{s}_k,\hat{t}_k)$.} 
%\begin{figure}[H]
%\centering
%\includegraphics[width=0.9\linewidth]{figf2}
%\caption{Variation of $q(\hat{x}_k)$ with the number of iterations}\label{figf2}
%\end{figure}

\begin{figure}[H]
\centering
\includegraphics[width=0.9\linewidth]{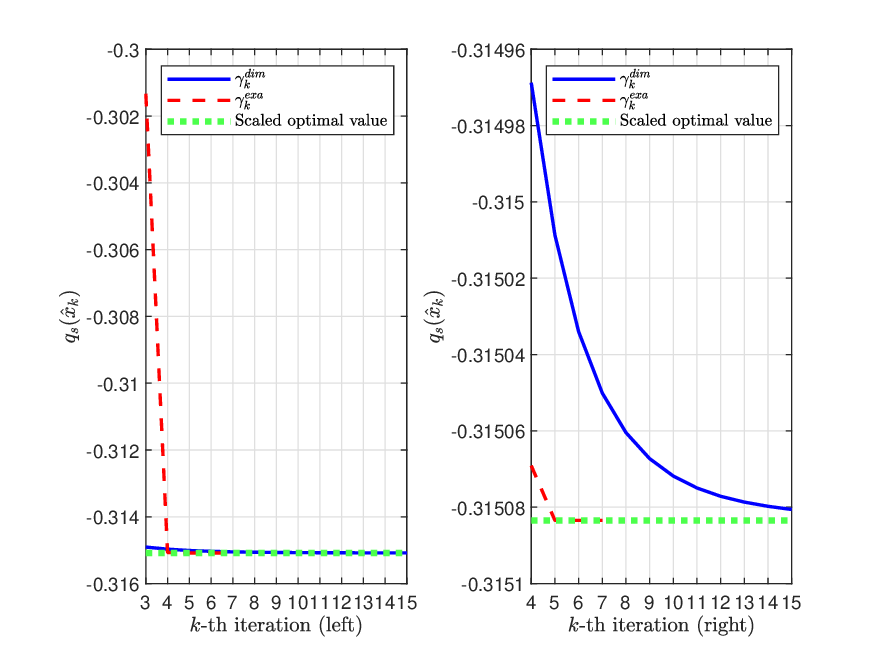}
\caption{Variation of $q_s(\hat{x}_k)$ with the number of iterations}\label{figf3}
\end{figure}

\section{Conclusion}
In this paper, we studied the nonconvex quadratic root-difference minimization problems under elliptic annulus constraints {\rm (QR)}, which is equivalent to a linear conic programming problem {\rm (LC)}. Thus {\rm (QR)} enjoys the hidden convexity property and can be solved through CVX solver. 
We equivalently reformulated {\rm (QR)} as a 2-dimensional convex programming problem {\rm (HP)} (different from {\rm (LC)}) with the help of newly developed Annulus Brickman theorem. However, it is challenging to solve {\rm (HP)} directly. Then we applied the Frank-Wolfe method on the hidden variables to solve {\rm (HP)}. The method converges with respect to not only the objective value, but also the iterates. We further prove that the solutions of the Frank-Wolfe subproblems are $O(1/\sqrt{k})$-approximate solutions of the original problem {\rm (QR)}, which eliminates the need to solve a computationally expensive quadratic system for solution recovery. With this recovery-free property, we develop the Iterative Minimum Generalized Eigenpair (IMGE) algorithm for globally solving {\rm (QR)}. Finally, numerical results demonstrate the computational efficiency and the convergency of the proposed IMGE algorithm. Future work will include inexactly solving the subproblem to accelerate IMGE algorithm.

\begin{appendices}
\section*{Appendix A. The case when $\alpha=0$ in {\rm (QR)}.}
Consider the case when $\alpha=0$ in {\rm (QR)}, we denote it by
\begin{eqnarray*}
{\rm(QR_0)}~ &\min\limits_{x\in\Bbb R^n}& x^TAx-\sqrt{x^TBx}\\
~~&{\rm s.t.}&0 \leq x^TCx\leq \beta.
\end{eqnarray*} 
Then it holds that
\begin{eqnarray}
v{\rm (QR_0)}&\leq& \min_{0\leq x^TCx\leq \beta}\lambda_{\max}(A,C)x^TCx-\sqrt{\lambda_{\min(B,C)}}\sqrt{x^TCx}\label{ieq1}\\
&=&\left\{
\begin{aligned}
&-\frac{\lambda_{\min}(B,C)}{4\lambda_{\max}(A,C)},~~if~\lambda_{\max}(A,C)>0,~\frac{\sqrt{\lambda_{\min}(B,C)}}{2\lambda_{\max}(A,C)}\leq\sqrt{\beta},\\
&\lambda_{\max}(A,C)\beta-\sqrt{\lambda_{\min}(B,C)\beta},~~otherwise,\nonumber
\end{aligned}
\right.\\
&\triangleq& \bar{f}<0,\nonumber
\end{eqnarray}
where (\ref{ieq1}) holds since
 $\lambda_{\min}(A,C)\leq \frac{x^TAx}{x^TCx}\leq \lambda_{\max}(A,C)$ and $\lambda_{\min}(B,C)\leq \frac{x^TBx}{x^TCx}\leq \lambda_{\max}(B,C)$. 

\begin{prop}
 Let $x^*$ be any minimizer of ${\rm (QR_0)}$. Then it holds that
 
\begin{eqnarray}
\sqrt{x^{*T}Cx^*}\geq\bar{\alpha}>0,\label{ub1}
\end{eqnarray} 
 where 
 \begin{eqnarray*}
\bar{\alpha}=\left\{
\begin{aligned}
&-\frac{\bar{f}}{\sqrt{\lambda_{\max}(B,C)}},~~if~\lambda_{\min}(A,C)\geq0,\\
&\frac{\sqrt{\lambda_{\max}(B,C)}-\sqrt{\lambda_{\max}(B,C)+4\lambda_{\min}(A,C)\bar{f}}}{2\lambda_{\min}(A,C)},~~if~\lambda_{\min}(A,C)<0. \label{ub}
\end{aligned}
\right.
 \end{eqnarray*}
%\begin{eqnarray}
%&&\sqrt{x^{*T}Cx^*}\nonumber\\
%&\geq&
%\left\{
%\begin{aligned}
%&-\frac{\bar{f}}{\sqrt{\lambda_{\max}(B,C)}},~~if~\lambda_{\min}(A,C)\geq0,\\
%&\frac{\sqrt{\lambda_{\max}(B,C)}-\sqrt{\lambda_{\max}(B,C)+4\lambda_{\min}(A,C)\bar{f}}}{2\lambda_{\min}(A,C)},~~if~\lambda_{\min}(A,C)<0 \label{ub}
%\end{aligned}
%\right.\\
%&\triangleq& \bar{\alpha}>0.\nonumber
%\end{eqnarray} 
\end{prop}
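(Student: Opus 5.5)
The plan is to compare the optimal value with $\bar f$ and then lower-bound the objective in terms of $r=\sqrt{x^{*T}Cx^*}$ alone.

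\textbf{Upper bound.} Since $x^*$ is a minimizer, $q(x^*)=v{\rm (QR_0)}\le \bar f<0$ by (\ref{ieq1}). In particular $x^*\neq 0$, because $q(0)=0>\bar f$.

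\textbf{Lower bound as a function of $r$.} Set $r=\sqrt{x^{*T}Cx^*}\in(0,\sqrt\beta]$. Using the generalized Rayleigh quotient bounds $x^TAx\ge \lambda_{\min}(A,C)\,x^TCx$ and $x^TBx\le\lambda_{\max}(B,C)\,x^TCx$, we get
\[
\bar f\ \ge\ q(x^*)\ \ge\ \lambda_{\min}(A,C)\,r^2-\sqrt{\lambda_{\max}(B,C)}\,r .
\]
Write $a=\lambda_{\min}(A,C)$ and $b=\sqrt{\lambda_{\max}(B,C)}>0$ (positive since $B\succ0$). Then $r$ satisfies $a r^2-br-\bar f\le 0$ with $-\bar f>0$.

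\textbf{Case $a\ge 0$.} Since $ar^2\ge0$, the inequality gives $-br\le \bar f$, so $r\ge -\bar f/b=\bar\alpha>0$.

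\textbf{Case $a<0$.} This is the step needing care. Consider the concave quadratic $p(r)=ar^2-br-\bar f$. We have $p(0)=-\bar f>0$, and $p$ is negative for all large $r$. If the discriminant $b^2+4a\bar f$ were negative, $p$ would have no real roots and would be negative everywhere, contradicting $p(0)>0$. So $b^2+4a\bar f>0$, and since $a<0$, $\bar f<0$, it also satisfies $b^2+4a\bar f<b^2$.

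The roots are $r_\pm=\frac{b\mp\sqrt{b^2+4a\bar f}}{2a}$. Because $a<0$:
\begin{itemize}
\item $r_+=\frac{b-\sqrt{b^2+4a\bar f}}{2a}$ has positive numerator over a negative denominator, so $r_+<0$;
\item $r_-=\frac{b+\sqrt{b^2+4a\bar f}}{2a}$ is also negative.
\end{itemize}
That sign pattern is impossible given $p(0)>0$ with $p$ concave. Recomputing with $p(r)=ar^2-br-\bar f$, the product of the roots is $-\bar f/a<0$, so exactly one root is positive. Explicitly, the positive root is $\frac{b-\sqrt{b^2+4a\bar f}}{2a}$ after the sign of $a$ is handled: writing $|a|=-a$, it equals $\frac{\sqrt{b^2+4a\bar f}-b}{2|a|}\cdot(-1)$.

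I expect this sign bookkeeping to be the main obstacle. The key point is to identify the stated $\bar\alpha$ as the unique positive root of $p$ and to check that it is positive. On $r\ge0$, $p>0$ exactly on $[0,r^*)$, where $r^*$ is that positive root. The constraint $p(r)\le0$ therefore forces $r\ge r^*=\bar\alpha$. The precise sign convention for the root will be verified against the displayed formula for $\bar\alpha$ when writing this out.
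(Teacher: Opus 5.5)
Your route is the paper's: bound $q(x^*)$ below by $\lambda_{\min}(A,C)\,r^2-\sqrt{\lambda_{\max}(B,C)}\,r$ with $r=\sqrt{x^{*T}Cx^*}$, compare with $\bar f<0$, and split on the sign of $\lambda_{\min}(A,C)$. Your case $a\ge 0$ is correct. The case $a<0$, however, is wrong as written, and it is the only nontrivial part of the statement.

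Your claim $b^2+4a\bar f<b^2$ is backwards. Since $a<0$ and $\bar f<0$, we have $a\bar f>0$, hence $b^2+4a\bar f>b^2>0$. This false inequality is what produced your impossible conclusion that $\frac{b-\sqrt{b^2+4a\bar f}}{2a}$ is negative; its numerator is in fact negative, not positive. Your later ``explicit'' rewriting $\frac{\sqrt{b^2+4a\bar f}-b}{2|a|}\cdot(-1)$ also carries a spurious minus sign, so it is a negative number and cannot be the positive root. You then defer the identification of the displayed $\bar\alpha$ with the positive root, but that identification is exactly what has to be proved.

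The repair is short. Since $\sqrt{b^2+4a\bar f}>b$, the root $\bar\alpha=\frac{b-\sqrt{b^2+4a\bar f}}{2a}=\frac{\sqrt{b^2+4a\bar f}-b}{2|a|}$ is positive, being a negative numerator over a negative denominator. The other root $\frac{b+\sqrt{b^2+4a\bar f}}{2a}$ is negative, which agrees with your Vieta observation that the product of the roots is $-\bar f/a<0$. Positivity of the discriminant is also immediate, so no contradiction argument is needed. Finally, $p'(r)=2ar-b<0$ for $r\ge 0$, so $p$ is strictly decreasing on $[0,\infty)$ from $p(0)=-\bar f>0$. Hence $p(r)\le 0$ with $r\ge 0$ forces $r\ge\bar\alpha$. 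With this correction your argument matches the paper's, which simply asserts that the bound follows from the quadratic inequality.
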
 
\begin{proof} 
Since $x^*$ is any minimizer of ${\rm (QR_0)}$. Then it holds that
\begin{eqnarray}
0>\bar{f}\geq v{\rm (QR_0)}&=&x^{*T}Ax^*-\sqrt{x^{*T}Bx^*}\nonumber\\
&\geq& \lambda_{\min}(A,C)x^{*T}Cx^*-\sqrt{\lambda_{\max}(B,C)x^{*T}Cx^*}\label{ieq2}\\
&\geq&
\left\{
\begin{aligned}
&-\sqrt{\lambda_{\max}(B,C)x^{*T}Cx^*},~~if~\lambda_{\min}(A,C)\geq0,\\
&(\ref{ieq2}),~~otherwise.\nonumber
\end{aligned}
\right.
\end{eqnarray}
Then (\ref{ub1}) holds. The proof is complete.
\end{proof}

Following the above analysis, we obtain the lower bound of $x^TCx$ when $\alpha=0$. And we can solve ${\rm (QR_0)}$ through the following equivalent problem that
\begin{eqnarray*}
{\rm(QR_{\bar{\alpha}})}~ &\min\limits_{x\in\Bbb R^n}& x^TAx-\sqrt{x^TBx}\\
~~&{\rm s.t.}&\bar{\alpha} \leq x^TCx\leq \beta,
\end{eqnarray*} 
which has the similar formulation as {\rm (QR)} since $\bar{\alpha}>0$. Thus it can be solved through the IMGE algorithm that we proposed in this paper. 
This is the reason why we can assume that $\alpha>0$ without loss of generality in Assumption \ref{ass3}.

\section*{Appendix B. The relationship between {\rm (QR)} and the generalized eigenvector corresponding to $\lambda_{\max}(B,A)$.}
For $A,~B(\in\Bbb S^{n\times n})\succ 0$, the generalized eigenvector corresponding to $\lambda_{\max}(B,A)$ can be solved through the following Rayleigh quotient maximization problem:
\begin{eqnarray*}
{\rm (RQP)}~\max\limits_{x(\neq 0)\in\Bbb R^n} \frac{x^TBx}{x^TAx}.
\end{eqnarray*}
%corresponds to solving the largest generalized eigenvalue of $Bx=\lambda Ax$. And we denote the largest generalized eigenvalue by $\lambda_{\max}(B,A)$. 
 
In \cite{LX2025siam}, the authors proved that {\rm (RQP)} is equivalent to solve the following unconstrained minimization problem:
\begin{eqnarray*}
{\rm (UP)}~\min\limits_{x\in\Bbb R^n} x^TAx-\sqrt{x^TBx},
\end{eqnarray*}
where when $A=I$, it is to compute the eigenvector corresponding to $\lambda_{\max}(B)$.
This model was originally studied in \cite{A1989,A1991} and further researched in \cite{LXeig,LX2025siam}.

Denote the optimal solution of {\rm (UP)} by $x^*$. According to the first order optimality condition, it holds that
\[
Bx^*=2\sqrt{x^{*T}Bx^*}Ax^*.
\]
And $\lambda_{\max}(B,A)=2\sqrt{x^{*T}Bx^*}$ holds \cite{LX2025siam}. Besides, since
\[
\lambda_{\min}(B,C)\leq\frac{x^{*T}Bx^*}{x^{*T}Cx^*}\leq\lambda_{\max}(B,C).
\]
Thus
\[
\frac{\lambda_{\max}^2(B,A)}{4\lambda_{\max}(B,C)}\leq x^{*T}Cx^*\leq \frac{\lambda_{\max}^2(B,A)}{4\lambda_{\min}(B,C)}.
\]
As a result, if we set $\alpha\leq\frac{\lambda_{\max}^2(B,A)}{4\lambda_{\max}(B,C)}$, $\beta\geq\frac{\lambda_{\max}^2(B,A)}{4\lambda_{\min}(B,C)}$ and $A\succ 0$ in {\rm (QR)}, then it is equivalent to {\rm (UP)}.

\section*{Appendix C. The CDT viewpoint of {\rm (QR)}}
In {\rm (QR)}, we can introduce an auxiliary variable $y\in\Bbb R$ that $y^2\leq x^TBx$. 
Then it is equivalent to the following generalized nonhomogeneous CDT subproblem:
\begin{eqnarray*}
{\rm(GNCDT)}~ &\min\limits_{x\in\Bbb R^n,y\in \Bbb R}& x^TAx-y\\
~~&{\rm s.t.}&\alpha\leq x^TCx\leq \beta,\\
&&x^TBx\geq y^2.
\end{eqnarray*} 

It is well known that there is a duality gap for the general CDT subproblem \cite{CDT1985,SNTI2016,YZ2003}. 
However, there are still some special cases of CDT subproblem that enjoy the strong duality property. 
In \cite{SX2023}, the authors studied the homogeneous quadratic optimization in $\Bbb R^n~(n\geq 3)$ with two bilateral quadratic form constraints, 
which can also be seen as the homogeneous CDT subproblem with bilateral constraints {\rm (BHCDT)}. 
The authors proved that strong duality holds for {\rm (BHCDT)}. 
%While {\rm (GNHCDT)} can be seen as 
%%a homogeneous CDT subproblem in view of $x\in\Bbb R^n$,
%%but 
%a generalized nonhomogeneous CDT subproblem in view of the joint range of $(x,y)\in\Bbb R^n\times \Bbb R$.
In this part, we can conclude that {\rm (GNCDT)}, which is a generalized nonhomogeneous CDT subproblem, enjoys the hidden convexity property since it is equivalent to {\rm (QR)}.

Furthermore, let's review the following non-quadratic extension of homogeneous S-lemma proposed in \cite{YWX2023}, based on which we can equivalently reformulate {\rm (GNCDT)} as a convex programming problem.
\begin{theorem}
[\cite{YWX2023}]\label{Ulem:12}
Suppose $
f_i(x)=x^TA_ix,~i=0,1,...,m
$
 are quadratic forms on $\Bbb R^n$ $(n\geq 3)$ with $A_i=A_i^T,~\{A_i:i=0,...,m\}$ are linear combinations of $\{B_0,B_1,B_2\}\subset\mathcal{S}^n$, and
 $
 \omega_0B_0+\omega_1 B_1+\omega_2B_2\succ 0$
  for some scalars $\omega_0,\omega_1,\omega_2$. Let  $q_0(z),\ldots,q_m(z)$ be convex functions defined in a convex set $\bar{\Omega}\subseteq \Bbb R^n$. Assume there exist
 $\tilde{x}\in \Bbb R^n,~\tilde{z}\in{\rm relint} (\bar{\Omega})$ such that
$
 f_i(\tilde{x})+q_i(\tilde{z})<0,~i=1,\ldots,m.
$
 The following two statements are equivalent:
 \begin{itemize}
\item[{\rm (i)}] The following system is not solvable:
\[
f_0(x)+q_0(z)<0,~ f_i(x)+q_i(z)\leq 0,~i=1,\ldots,m,~z\in\bar{\Omega}.
\]
\item[{\rm (ii)}] There exist
$
\mu_i\geq 0,~i=1,2\ldots,m
$
 such that
\[
f_0(x)+\sum_{i=1}^m\mu_i f_i(x)+q_0(z) +\sum_{i=1}^m\mu_i q_i(z)\geq 0,~\forall~x\in \Bbb R^n,z\in \bar{\Omega}.
\]
\end{itemize}
\end{theorem}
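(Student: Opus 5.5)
This is the non-quadratic S-lemma; the plan is to prove it as a separation argument in a low-dimensional image space. The engine is the convexity of the joint range of $(x^TB_0x,x^TB_1x,x^TB_2x)$ from Theorem \ref{Polyak}, combined with the convexity of the functions $q_i$.

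The direction (ii)$\Rightarrow$(i) is immediate. If some $(x,z)$ satisfied the system in (i), then evaluating the inequality in (ii) at that point gives
\[
\bigl(f_0(x)+q_0(z)\bigr)+\sum_{i=1}^m\mu_i\bigl(f_i(x)+q_i(z)\bigr)<0,
\]
because the first term is negative and every other term is nonpositive since $\mu_i\ge 0$. This contradicts (ii).

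For (i)$\Rightarrow$(ii), first pass to the image space.
\begin{itemize}
\item Write $A_i=\sum_{j=0}^2 a_{ij}B_j$, so that $f_i(x)=\sum_j a_{ij}\,x^TB_jx$.
\item By Theorem \ref{Polyak}, since $n\ge 3$ and the $B_j$ admit a positive definite combination, the set
\[
K=\{(x^TB_0x,x^TB_1x,x^TB_2x):x\in\Bbb R^n\}
\]
is a closed convex cone.
\item Define
\[
\mathcal{U}=\Bigl\{(u_0,\dots,u_m)\in\Bbb R^{m+1}:\ \exists\, w\in K,\ z\in\bar\Omega,\ u_i\ge \textstyle\sum_j a_{ij}w_j+q_i(z),\ i=0,\dots,m\Bigr\}.
\]
\end{itemize}
The map $(w,z)\mapsto(\sum_j a_{ij}w_j+q_i(z))_i$ is linear in $w$ plus componentwise convex in $z$. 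The domain $K\times\bar\Omega$ is convex, and the set is an upper-orthant extension. Taking convex combinations of the pairs $(w,z)$ and using the convexity of each $q_i$ therefore shows $\mathcal{U}$ is convex.

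Statement (i) says that $\mathcal{U}$ does not meet the convex set
\[
\mathcal{N}=\{u:\ u_0<0,\ u_i\le 0,\ i\ge1\}.
\]
Separating these two convex sets gives $(\mu_0,\dots,\mu_m)\neq0$ with $\sum_i\mu_iu_i\ge 0$ on $\mathcal{U}$ and $\sum_i\mu_iu_i\le0$ on $\mathcal{N}$. Because $\mathcal{N}$ is unbounded below in every coordinate, all $\mu_i\ge0$. Substituting $u_i=f_i(x)+q_i(z)$ then yields
\[
\mu_0\bigl(f_0(x)+q_0(z)\bigr)+\sum_{i=1}^m\mu_i\bigl(f_i(x)+q_i(z)\bigr)\ge 0\quad\forall\,x\in\Bbb R^n,\ z\in\bar\Omega.
\]

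The remaining step is to show $\mu_0>0$, after which dividing by $\mu_0$ gives (ii). This is the Slater argument. Suppose $\mu_0=0$. Then some $\mu_i>0$ with $i\ge1$, and evaluating at the Slater point $(\tilde x,\tilde z)$ makes the sum strictly negative, a contradiction.

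The main obstacle is rigor in the separation step, since $\mathcal{U}$ may fail to be closed and the $q_i$ are only convex on $\bar\Omega$. To handle this, separate $\mathcal{U}$ from the open set $\mathrm{int}\,\mathcal{N}=\{u:u_i<0\ \forall i\}$, using the finite-dimensional separation theorem for disjoint convex sets where one has nonempty interior. This is why $\tilde z\in\mathrm{relint}(\bar\Omega)$ is assumed: it ensures that the $q_i$ are finite and continuous near $\tilde z$ relative to the affine hull, so that strict feasibility at $(\tilde x,\tilde z)$ is robust.

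The one point to verify with care is why disjointness from $\mathcal{N}$ (with $u_0<0$ strict and $u_i\le0$) already implies disjointness from the interior. This holds because $\mathrm{int}\,\mathcal{N}\subset\mathcal{N}$, so the weaker disjointness suffices. The resulting inequality $\sum_i\mu_iu_i\ge0$ on $\mathcal{U}$ is exactly what is needed, with the sign of $\mu$ recovered as above.
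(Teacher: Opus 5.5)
The paper does not prove this theorem. It quotes it from \cite{YWX2023} and uses it only as a black box in Appendix C to rederive (LC), so there is no in-paper argument to compare your route against.

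On its own merits your proof is correct and complete. The steps all go through:
\begin{itemize}
\item the cone $K$ is convex by Theorem~\ref{Polyak};
\item $\mathcal{U}$ is convex, because one witness $(w,z)$ serves all coordinates and each coordinate is linear in $w$ and convex in $z$;
\item $\mathcal{U}\cap\mathcal{N}=\emptyset$ is exactly a restatement of (i), since every $w\in K$ equals $(x^TB_jx)_j$ for some $x$;
\item separation gives $\mu\geq 0$ and $\mu^Tu\geq 0$ on $\mathcal{U}$;
\item evaluating at the Slater point forces $\mu_0>0$.
\end{itemize}
This combination of image-space convexity and separation is the standard scheme for S-lemma variants of this kind.

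Two remarks on your commentary; neither affects validity.

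First, the worry that $\mathcal{U}$ may not be closed is unnecessary. In finite dimensions, any two disjoint nonempty convex sets admit a nonzero separating linear functional, so you could separate $\mathcal{U}$ from $\mathcal{N}$ directly. Passing to $\mathrm{int}\,\mathcal{N}$ is harmless but not needed. What you should state explicitly is $\mathcal{U}\neq\emptyset$, which holds because $\tilde z\in\bar\Omega$. Nonemptiness of $\mathcal{U}$ is what makes $\sup_{\mathcal{N}}\mu^Tv$ finite, and hence what forces $\mu\geq 0$ and the separating constant $0$.

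Second, your explanation of the hypothesis $\tilde z\in\mathrm{relint}(\bar\Omega)$ does not match your own proof. The step $\mu_0>0$ only evaluates the separated inequality at $(\tilde x,\tilde z)$, which needs merely $\tilde z\in\bar\Omega$. No continuity of the $q_i$ near $\tilde z$ is used anywhere. Your argument therefore proves the theorem under the weaker Slater condition $\tilde z\in\bar\Omega$. You should say that, rather than assign the relative interior a role it does not play.
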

Then it holds that
%
%\begin{eqnarray*}
%{\rm(GNHCDT)}~ &\min\limits_{x\in\Bbb R^n,y\in \Bbb R}& x^TAx-y\\
%~~&{\rm s.t.}&\alpha\leq x^TCx\leq \beta,\\
%&&x^TBx\geq y^2.
%\end{eqnarray*} 
%
\begin{eqnarray}
&&v({\rm GNCDT}) \nonumber\\
&=&\min~ \{x^TAx-y:~x^TCx\geq \alpha, x^TCx\leq \beta, -x^TBx+y^2\leq 0, \nonumber\\
&&~~~~~~~~~~~~~~~~~~~~~~~~~~~~~~~~~~~~~~~~~~~~~~~~~~~~~~~~~~~~~~~~~~~~~~~~~~\forall x\in\Bbb R^{n},\forall y\in\Bbb R\}\nonumber\\%&&~~~~~~~~~~~~~~~~~~~~~~~~~~~~~~~~~~~~~~~~~~~~~~\label{eq:2}\\
&=&\sup_{t}\{t:\{(x,y):~x^TAx-y<t, -x^TCx+\alpha \leq 0, x^TCx-\beta\leq 0, 
\nonumber\\
&&~~~~~~~~~~~~~~~~~~~~~~~~~~~~~~~~~~~~~~~~~~ -x^TBx+y^2\leq0,~\forall x\in\Bbb R^{n},\forall y\in\Bbb R\}=\emptyset\}\nonumber\\
&=&\sup_{t}\{t:\{~\exists\lambda\in\Bbb R^3_+:~x^TAx-y-t+\lambda_1(-x^TCx+\alpha)\nonumber\\
&&~~~~~~~~~~~+\lambda_2(x^TCx-\beta)+\lambda_3(-x^TBx+y^2)\geq0, \forall x\in\Bbb R^{n},\forall y\in\Bbb R\}\} \label{pb:1}\\
%&=&\sup_{\lambda\in\Lambda,t}\{t:~\inf_{x\in\Bbb R^n} x^T(A_0-\lambda_1A_1+\lambda_2A_1+\lambda_3A_2)x\nonumber\\
%&&~~~~~~~~~~~~~~~~~~~~~~~~~~~~~~~~~~~~~~~~~-t+\lambda_1m-\lambda_2M+\min_{y\in\Bbb R^m}\{ h(y)+\lambda_3g(y)\}\geq 0\} \nonumber\\
% %\label{pb:2}\\
&=&\sup_{t}\{t:\{~\exists\lambda\in\Bbb R^3_+:~x^T(A-\lambda_1C+\lambda_2C-\lambda_3B)x\nonumber\\
&&~~~~~~~~~~~~~~~~~~~~~~~~~~~~~~-t+\lambda_1\alpha-\lambda_2\beta+\lambda_3y^2-y\geq 0,
 \forall x\in\Bbb R^{n},\forall y\in\Bbb R\}\} \nonumber\\
&=&\sup_{\lambda\in\Bbb R^3_+,t} \left\{t: 
\inf_xx^T(A-\lambda_1C+\lambda_2C-\lambda_3B)x\right.\nonumber\\
&&\left.~~~~~~~~~~~~~~~~~~~~~~~~~~~~~~~~~~~~~~~~~~-t+\lambda_1\alpha-\lambda_2\beta+\inf_y\left\{\lambda_3y^2-y\right\}\geq0
\right\}\nonumber\\%\label{eq:1}\\
&=&\sup_{\lambda\in\Bbb R^3_+,t} \left\{t: 
A-\lambda_1C+\lambda_2C-\lambda_3B\succeq 0,~-t+\lambda_1\alpha-\lambda_2\beta-\frac{1}{4\lambda_3}\geq 0
\right\}\nonumber\\%\label{eq:1}\\
&=&\sup_{\lambda\in\Bbb R^3_+} \left\{\lambda_1\alpha-\lambda_2\beta-\frac{1}{4\lambda_3}: 
A-\lambda_1C+\lambda_2C-\lambda_3B\succeq 0
\right\}\nonumber\\%\label{eq:1}\\
&=&{\rm (LC)},\nonumber
%&=&\sup_{\lambda\in\Bbb R^3_+,\mu\geq0} \left\{\lambda_1\alpha-\lambda_2\beta-\mu: 
%A+(\lambda_2-\lambda_1)C-\lambda_3B\succeq 0,\left\| \left(\begin{array}{c}1\\
%\lambda_3-\mu \end{array}\right)\right\|\leq\lambda_3+\mu
%\right\},\nonumber%\label{eq:1}\\
\end{eqnarray}
where (\ref{pb:1}) holds due to Theorem \ref{Ulem:12} by letting 
$f_0(x)=x^TAx$, $q_0(y)=-y-t$, $f_1(x)=-x^TCx$, $q_1(y)=\alpha$, $f_2(x)=x^TCx$, $q_2(y)=-\beta$, $f_3(x)=-x^TBx$, $q_3(y)=y^2$.
%which is a convex programming problem and equivalent to {\rm (LC)}. 
The result indicates that we reveal the hidden convexity of {\rm (GNCDT)}. Besides, this result coincide with the previous conclusion that {\rm (GNCDT)} is equivalent to {\rm (QR)}.
\end{appendices}

\section*{Declarations}
\textbf{Funding}
This research was supported by the National Natural Science Foundation of China under grant 12101041, by Interdisciplinary Research Project for Young Teachers of USTB (Fundamental Research Funds for the Central Universities) under grant FRF-IDRY-24-026, by the Beijing Natural Science Foundation under grant 4252006.
\\
\\
\textbf{Conflict of interest}
The authors declared that they have no conflicts of interest to this work.
\\
\\
\textbf{Availability of data and materials}
 The data and code that support this study are available from the corresponding author upon request.

%%===========================================================================================%%
%% If you are submitting to one of the Nature Portfolio journals, using the eJP submission   %%
%% system, please include the references within the manuscript file itself. You may do this  %%
%% by copying the reference list from your .bbl file, paste it into the main manuscript .tex %%
%% file, and delete the associated \verb+\bibliography+ commands.                            %%
%%===========================================================================================%%

%\bibliography{ref}
%common bib file
%% if required, the content of .bbl file can be included here once bbl is generated
%%\input sn-article.bbl

%% Default %%
%%\input sn-sample-bib.tex%

\end{document}